\numberwithin{equation}{section}
\newtheorem{theorem}[equation]{Theorem}
\newtheorem{lemma}[equation]{Lemma}
\newtheorem{proposition}[equation]{Proposition}
\newtheorem{reduction}[equation]{Reduction}
\newcounter{FNC}[page]
\def\fauxfootnote#1{{\addtocounter{FNC}{2}$^\fnsymbol{FNC}$%
     \let\thefootnote\relax\footnotetext{$^\fnsymbol{FNC}$\Magenta{#1}}}}
\newcommand{\simplex}{\includegraphics{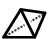}}
\newcommand{\ssimplex}{\includegraphics{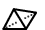}}
\newcommand{\dcol}[1]{\Blue{#1}}
\newcommand{\demph}[1]{\dcol{{\sl #1}}}
\newcommand{\conv}{\operatorname{conv}}
\newcommand{\ubc}{\operatorname{ubc}}
\newcommand{\vol}{\operatorname{vol}}
\newcommand{\C}{\mathbb{C}}
\newcommand{\G}{\mathbb{G}}
\renewcommand{\P}{\mathbb{P}}
\newcommand{\R}{\mathbb{R}}
\newcommand{\T}{\mathbb{T}}
\newcommand{\Z}{\mathbb{Z}}
\newcommand{\calA}{{\mathcal{A}}}
\newcommand{\calB}{{\mathcal{B}}}
\newcommand{\calM}{{\mathcal{M}}}
\newcommand{\calV}{{\mathcal{V}}}
\newcommand{\calW}{{\mathcal{W}}}
\title{Dense Fewnomials}
\author[K.~Rusek]{Korben Rusek}
\address{Department of Mathematics\\
         Texas A\&M University\\
         College Station\\
         TX \ 77843}
\email{krusek@math.tamu.edu}
\urladdr{http://www.math.tamu.edu/\~{}krusek}
\author[J.~Shakalli]{Jeanette Shakalli}
\address{Department of Mathematics\\
         Texas A\&M University\\
         College Station\\
         TX \ 77843}
\email{shakalli@math.tamu.edu}
\urladdr{http://www.math.tamu.edu/\~{}shakalli}
\author[F.~Sottile]{Frank Sottile}
\address{Department of Mathematics\\
         Texas A\&M University\\
         College Station\\
         TX \ 77843}
\email{sottile@math.tamu.edu}
\urladdr{http://www.math.tamu.edu/\~{}sottile}
\keywords{Fewnomials, Stratified Morse Theory}
\thanks{Research of Sottile supported in part by NSF grants DMS-070105 and
  DMS-1001615.}
\thanks{Research of Rusek supported in part by DOE ASCR grant DE-SC0002505}
\subjclass[2000]{14P99}
\begin{document}
%%%%%%%%%%%%%%%%%%%%%%%%%%%%%%%%%%%%%%%%%%%%%%%%%%%%%%%%%%%%%%%

\begin{abstract}
  We derive new bounds of fewnomial type for the number of real solutions to
  systems of polynomials that have structure intermediate between fewnomials and 
  generic (dense) polynomials.
  This uses a modified version of Gale duality for polynomial systems.
  We also use stratified Morse theory to bound the total Betti number of a hypersurface
  defined by such a dense fewnomial.
  These bounds contain and generalize previous bounds for
  ordinary fewnomials obtained by Bates, Bertrand, Bihan, and Sottile.
\end{abstract}
%%%%%%%%%%%%%%%%%%%%%%%%%%%%%%%%%%%%%%%%%%%%%%%%%%%%%%%%%%%%%%%
\maketitle
%%%%%%%%%%%%%%%%%%%%%%%%%%%%%%%%%%%%%%%%%%%%%%%%%%%%%%%%%%%%%%%%%%%%%%%%%%%%%

%%%%%%%%%%%%%%%%%%%%%%%%%%%%%%%%%%%%%%%%%%%%%%%%%%%%%%%%%%%%%%%%%%%%%%%%%%%%%
%
\section{Introduction}

The classical theorem of B\'ezout~\cite{Bezout} bounds the number of solutions
to a system of polynomials by the product of their degrees.
While this B\'ezout bound is sharp for generic systems of polynomial equations,
that is no longer the case when the equations possess additional structure.
For example, Kushnirenko~\cite{BKK} showed that if all the polynomials have
the same Newton polytope, then the number of nondegenerate solutions to
such a system is at most the volume of the Newton polytope, suitably normalized.  

Bounds for the number of nondegenerate real solutions are governed by
Kushnirenko's ``fewnomial principle'': roughly, few monomials implies few
solutions or restricted topology~\cite{Kush_note}. 
This principle was established by Khovanskii in his fundamental work on
fewnomials~\cite{Kh91} in which he showed that a system of $n$ polynomials in $n$ variables
where the polynomials have $1{+}k{+}n$ distinct monomials has fewer than
 \begin{equation}\label{Eq:Khovanskii_bound}
   2^{\binom{k{+}n}{2}}\, (n{+}1)^{k{+}n}
 \end{equation}
nondegenerate positive solutions.
This bound is remarkable as it is independent of the degrees and Newton polytopes of the
polynomials, which control the number of complex solutions.
Few of the complex solutions to a fewnomial system can be real.

Khovanskii's bound was lowered by Bihan and Sottile~\cite{BS_Khovanskii} to
 \begin{equation}\label{Eq:BS}
   \frac{e^2{+}3}{4}\, 2^{\binom{k}{2}}\, n^k\,.
 \end{equation}
For this, they transformed the original polynomial system into an
equivalent Gale dual~\cite{BS_Gale} system of rational functions, whose number of solutions 
they bounded.
An essential step for the bound on Gale systems uses Khovanskii's generalization of the
classical Rolle Theorem.

The bound~\eqref{Eq:BS} is smaller than Khovanskii's
bound~\eqref{Eq:Khovanskii_bound} because Khovanskii's bound is a specialization to
polynomials of a bound for more general functions and the proof
of~\eqref{Eq:BS} takes advantage of some special geometry enjoyed by polynomials. 

We derive bounds of fewnomial type for polynomial systems with structure 
intermediate between that of fewnomials and general polynomials.
These bounds can be dramatically smaller than the fewnomial bound~\eqref{Eq:BS}, and like 
that bound do not depend upon the degrees of the polynomials involved.
A collection $\calA\subset\Z^n$ of exponent vectors is 
\demph{$(d,\ell)$-dense} if there are integers $d,\ell$ such that 
$\calA$ admits a decomposition of the form
  \begin{equation}\label{Eq:strFew}
   \calA\ =\ \psi(d\simplex^\ell\cap\Z^\ell)\; \bigcup\; \calW\,,
  \end{equation}
where $\calW$ consists of $n$ affinely independent vectors, $\psi\colon\Z^\ell\to\Z^n$
is an affine-linear map, and $\simplex^\ell$ is the unit simplex in $\R^\ell$.

A Laurent polynomial whose support $\calA$ is $(d,\ell)$-dense~\eqref{Eq:strFew} is a  
\demph{$(d,\ell)$-dense fewnomial}.
We show that a system of $(d,\ell)$-dense fewnomials
in $n$ variables has fewer than
 \begin{equation}\label{Eq:structured_bound}
   \frac{e^2{+}3}{4}\, 2^{\binom{\ell }{2}}\, n^\ell \cdot d^\ell
 \end{equation}
nondegenerate positive solutions.
To compare this to~\eqref{Eq:BS}, observe that $k{+}1\ =\ \binom{d+\ell}{\ell}$,
so the bounds coincide when $d=1$, but otherwise~\eqref{Eq:structured_bound} is dramatically
smaller.

In~\cite{BBS_all}, the methods of~\cite{BS_Khovanskii} were extended to establish the bound
 \begin{equation}\label{Eq:BBS_bound}
   \frac{e^{\Red{4}}+3}{4}\, 2^{\binom{k}{2}}\, n^k\,,
 \end{equation}
for the number of nonzero real solutions to a fewnomial system---not just positive
solutions---when the exponent vectors span a sublattice of $\Z^n$ of odd index.
The same arguments show that if the exponent vectors $\calA$ span a
sublattice of $\Z^n$ of odd index, then the number of nondegenerate real solutions to a system
of $(d,\ell)$-dense fewnomials is at most
 \begin{equation}\label{Eq:structured_real}
   \frac{e^{\Red{4}}+3}{4}\, 2^{\binom{\ell }{2}}\, n^\ell \cdot d^\ell\,.
 \end{equation}

Khovanskii also gave a bound for the sum \demph{$b_*(X)$} of the Betti 
numbers of a smooth hypersurface in the positive orthant $\R^n_>$ defined by a polynomial
with $1{+}k{+}n$ monomial terms~\cite{Kh91} (Corollary 4, p. 91),
 \begin{equation}\label{eq:Khovanskii}
    b_*(X)\ \leq\   (2n^2{-}n{+}1)^{k{+}n}\, (2n)^{n{-}1}\, 2^{\binom{k{+}n}{2}}\,.
 \end{equation}
Bihan and Sottile~\cite{BS_Betti} used the fewnomial bound and stratified Morse theory for
a manifold with corners~\cite{GM88} to lower this to
 \begin{equation}\label{eq:Betti}
    b_*(X) \ <\   \frac{e^2{+}3}{4}\, 2^{\binom{k}{2}}
    \cdot \sum_{i=0}^{n} \tbinom{n}{i}\, i^k\,.
 \end{equation}
The same arguments show that when $X$ is defined by a $(d,\ell)$-dense fewnomial, we have 
 \begin{equation}\label{Eq:structured_Betti}
  b_*(X)\ <\ \frac{e^2{+}3}{4}\, 2^{\binom{\ell}{2}} \cdot d^\ell
    \cdot \sum_{i=0}^{n} \tbinom{n}{i}\, i^\ell\,.
 \end{equation}

An important step in these arguments is a version of Gale duality for
dense fewnomials, which generalizes Gale duality for polynomial
systems as established in~\cite{BS_Gale}.

These bounds~\eqref{Eq:structured_bound},~\eqref{Eq:structured_real},
and~\eqref{Eq:structured_Betti} simultaneously generalize the results
of~\cite{BS_Khovanskii},~\cite{BBS_all}, 
and~\cite{BS_Betti}, which are the cases when $d=1$.
The case of $\ell=1$ of the bound~\eqref{Eq:BBS_bound} was established
in~\cite{BBS_few}, where a $(d,1)$-dense fewnomial was called a near circuit.

This paper is structured as follows.
We begin in Section~\ref{Sec:one} with definitions and examples of dense fewnomials, give the
precise statements of our main theorems, and study an example when $n=2$.
Section~\ref{S:gale} is devoted to establishing the variant of Gale duality appropriate for
dense fewnomials, and in Section~\ref{S:bounds}, which is the heart of this paper, we establish 
the bounds~\eqref{Eq:structured_bound} and~\eqref{Eq:structured_real}.
We develop the necessary tools and give the proof of our bound~\eqref{Eq:structured_Betti}
for the sum of Betti numbers in Section~\ref{Sec:Betti}.

We thank Maurice Rojas, who suggested looking for extentions of the fewnomial bound as a
class project to Rusek and Shakalli.

%%%%%%%%%%%%%%%%%%%%%%%%%%%%%%%%%%%%%%%%%%%%%%%%%%%%%%%%%%%%%%%%%%%%%%%%%%%%
%
\section{Dense Fewnomials}\label{Sec:one}

An integer vector $\alpha=(a_1,\dotsc,a_n)\in\Z^n$ is the exponent of a Laurent
monomial,
\[
  x^\alpha\ :=\ x_1^{a_1} x_2^{a_2}\dotsb x_n^{a_n}\,.
\]
A polynomial $f$ with \demph{support} $\calA\subset\Z^n$ is one whose exponent vectors
lie in $\calA$, 
\[
  f\ =\ \sum_{\alpha\in\calA} a_\alpha x^\alpha
  \qquad(a_\alpha\in\R)\,.
\]
We are interested in systems of $n$ polynomials, each with support $\calA$.
We obtain novel bounds on the number of real solutions to a system of polynomials when the set
of exponent vectors has structure that is intermediate between fewnomials
and dense polynomials.

Kushnirenko~\cite{BKK} showed that a general system of polynomials, all with support $\calA$,
will have $n!\vol(\conv(\calA))$ complex solutions, the normalized volume of the convex hull of 
the exponent vectors.
While this is also a bound for the number of real solutions, there is another bound which
depends only upon the number of exponents.
Specifically, a \demph{fewnomial system} is one in which the support $\calA$ consists of
$1{+}k{+}n$ monomials, but is otherwise unstructured, and such a system has the
bound~\eqref{Eq:BS} on its number of positive solutions.
When the exponents  $\calA$ affinely span a sublattice of odd index in $\Z^n$, a fewnomial
system has the bound~\eqref{Eq:BBS_bound} for its number of nondegenerate nonzero real solutions.

A \demph{dense fewnomial} is a polynomial whose support $\calA$ is intermediate
between fewnomials and general polynomials in the following way.
Let $d,\ell,n$ be positive integers, $\simplex^\ell\subset\R^\ell$ be the standard 
unit simplex, the convex hull of the original and the $\ell$ unit basis vectors, and
$\psi\colon\Z^\ell\to\Z^n$ be an affine-linear map. 
A \demph{$(d,\ell)$-dense fewnomial} is a Laurent polynomial $f$ whose support
$\calA\subset\Z^n$ admits a decomposition
 \begin{equation}\label{Eq:near_polyhedral}
  \calA\ =\ \psi(d\simplex^\ell\cap\Z^\ell)\;\bigcup\; \calW\,,
 \end{equation}
where $\calW=\{w_1,\dotsc,w_n\}$ consists of $n$ affinely independent vectors.
Such a set $\calA$~\eqref{Eq:near_polyhedral} is \demph{$(d,\ell)$-dense}. 

We give some examples of $(d,\ell)$-dense sets of exponent vectors.
 \begin{enumerate}
  \item Any collection $\calA$ of $1{+}\ell{+}n$ exponent vectors that affinely spans
    $\R^n$ is $(1,\ell)$-dense.
    To see this, let $\calW\subset\calA$ be $n$ affinely independent vectors.
    Writing $\calA-\calW=\{v_0,v_1,\dotsc,v_\ell\}$, these vectors 
    are the image of the integer points in $\simplex^\ell$ under the affine map $\psi$ that
    takes the $i$th unit vector to $v_i$ and the origin to $v_0$.

    Thus ordinary fewnomials with $1{+}k{+}n$ monomials are $(1,k)$-dense fewnomials.

  \item When $\ell=1$, the exponent vectors $\calA$ of a dense
    fewnomial form a \demph{near circuit} in the terminology of~\cite{BBS_few}.
    There, it was shown that if $\calA$ spans $\Z^n$, then a system with support $\calA$
    has at most $2dn{+}1$ nonzero real solutions.

  \item A general $(d,\ell)$-dense set $\calA$ has the following form,
 \begin{equation}\label{Eq:general_form}
   \calA\ :=\ 
    \Bigl\{ v_0\ +\ \sum_{m=1}^\ell \lambda_mv_m \mid 
            0\leq \lambda_m\,,\ \sum_i\lambda_i\leq d \Bigr\}
    \bigcup \calW\,,
 \end{equation}
    where $\calW=\{w_1,\dotsc,w_n\}\subset\Z^n$ is affinely independent and 
    $v_0,v_1,\dotsc,v_\ell$ are integer vectors.
    Below is an example of such a set $\calA$ in $\Z^2$ ($v_0=(0,0)$ is the open circle). 
 \begin{equation}\label{Eq:triangle}
  \raisebox{-38pt}{\begin{picture}(157,76)(-11,0)
   \put(0,0){\includegraphics{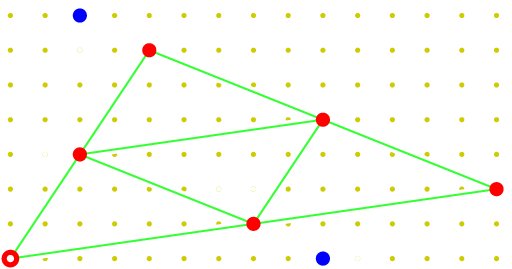}}

   \put(-11,1){$v_0$}  \put(64,21){$v_1$}
   \put(7,32){$v_2$}  
   \put(18,63){$w_2$}       \put(98,1){$w_1$}
  \end{picture}}
 \end{equation}
  For this, $n=\ell=d=2$, $\calW=\{(9,0),(2,7)\}$ and 
  $v_1=(7,1)$ and $v_2=(2,3)$.
  Here is a $(2,3)$-dense set in $\Z^2$.
\[
  \begin{picture}(187,106)(-11,0)
    \put(0,0){\includegraphics{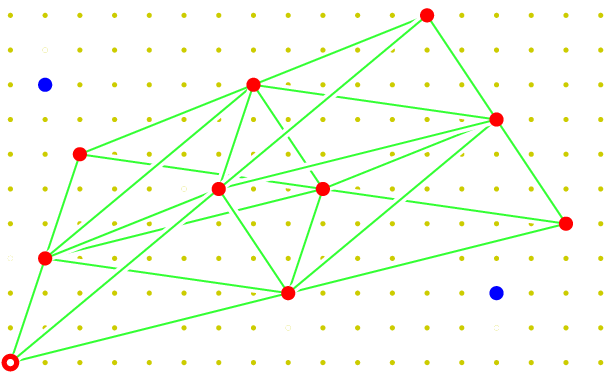}}
   \put(-3,32){$v_2$}  \put(78,13){$v_1$}
   \put(45,53){$v_3$}  \put(-11,1){$v_0$} 
   \put(8,89){$w_2$}   \put(138,13){$w_1$}
  \end{picture}
\]
 \end{enumerate}
%

%%%%%%%%%%%%%%%%%%%%%%%%%%%%%%%%%%%%%%%%%%%%%%%%%%%%%%%%%%%%%%%%%%%%%%%%%%%%
\begin{theorem}\label{Th:Structured_bound}
 Suppose that $\calA\subset\Z^n$ is $(d,\ell)$-dense.
 Then a system
\[
  f_1(x_1,\dotsc,x_n)\ =\ 
  f_2(x_1,\dotsc,x_n)\ =\ \dotsb \ =\ 
  f_n(x_1,\dotsc,x_n)\ =\ 0
\]
 of real polynomials with support $\calA$ has fewer than 
\[
   \frac{e^2{+}3}{4}\, 2^{\binom{\ell}{2}}\, n^\ell \cdot d^\ell
\]
  nondegenerate positive solutions.
  If the affine span of $\calA$ is a sublattice of\/ $\Z^n$ with odd index, then
  the number of nondegenerate real solutions is less than
\[
   \frac{e^4{+}3}{4}\, 2^{\binom{\ell}{2}}\, n^\ell \cdot d^\ell\,.
\]
\end{theorem}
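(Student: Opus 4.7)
The strategy is to adapt the Gale duality approach of~\cite{BS_Khovanskii} to the $(d,\ell)$-dense setting. The first step is to invoke the variant of Gale duality developed in Section~\ref{S:gale} to transform the system $f_1=\dotsb=f_n=0$ into an equivalent system in $\ell$ new variables. In the ordinary fewnomial case ($d=1$), this produces a system whose left-hand sides are Laurent monomials in $1{+}\ell{+}n$ affine linear forms on $\R^\ell$. In the $(d,\ell)$-dense case, the $\binom{d{+}\ell}{\ell}$ exponent vectors coming from $\psi(d\simplex^\ell\cap\Z^\ell)$ should collapse under duality into a single polynomial factor of degree $d$ in the $\ell$ new variables, while the $n$ affinely independent vectors of $\calW$ become $n$ affine linear forms. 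The outcome is a Gale dual master-function system involving $n$ linear factors together with a polynomial factor of degree $d$.

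The second step is to iterate Khovanskii's generalization of Rolle's theorem on this Gale dual system, following the same induction as in~\cite{BS_Khovanskii}. At each of the $\ell$ stages, one replaces an equation by the vanishing of a Jacobian determinant of logarithmic derivatives; these derivatives are rational functions whose numerators are again polynomials of the same type, with one additional linear factor per derivative. Tracking how the degree-$d$ factor propagates, every stage that contributed a factor of $n$ in~\eqref{Eq:BS} now contributes $nd$, since for the purposes of the Rolle-type zero count the single degree-$d$ polynomial behaves like $d$ linear pieces. The resulting combinatorial estimate is the same sum controlled in~\cite{BS_Khovanskii} with $n$ replaced by $nd$, producing the bound $\frac{e^2{+}3}{4}\,2^{\binom{\ell}{2}}\,n^\ell d^\ell$; the prefactor $\frac{e^2{+}3}{4}$ is insensitive to the substitution $n\mapsto nd$ because it arises from an exponential sum estimate that depends only on the length $\ell$ of the induction.

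For the real statement under the odd-index hypothesis on the affine span of $\calA$, one adapts the orthant-by-orthant argument of~\cite{BBS_all}. The odd-index condition ensures that for each of the $2^n$ sign patterns one obtains, after a character twist, an equivalent polynomial system with $(d,\ell)$-dense support whose positive solutions correspond bijectively to the real solutions of the original system in that orthant. Summing the positive bound across orthants and applying the refined Khovanskii-Rolle estimate of~\cite{BBS_all} replaces $e^2$ by $e^4$ in the prefactor, yielding the second assertion.

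The main obstacle is the second step: verifying that the Khovanskii-Rolle induction, when applied to master functions containing a degree-$d$ polynomial factor in addition to affine linear factors, introduces exactly the factor $d^\ell$ and no more. This requires a careful analogue of the root-counting lemma underlying~\cite{BS_Khovanskii} that separately handles the higher-degree factor, together with a verification that the iterated logarithmic derivatives preserve the dense-fewnomial structure at each stage, so that the induction can close on itself with the expected bookkeeping.
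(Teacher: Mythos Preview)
Your overall architecture—Gale duality followed by an iterated Khovanskii--Rolle argument, as in~\cite{BS_Khovanskii} and~\cite{BBS_all}—matches the paper. However, your description of the $(d,\ell)$-dense Gale dual system has the roles of the two pieces of $\calA$ reversed, and this is not a cosmetic slip: it is exactly what drives the degree bookkeeping.

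In the paper's duality (Section~\ref{S:gale}) one solves for the $n$ monomials $x^{w_i}$ with $w_i\in\calW$, obtaining $x^{w_i}=h_i(x^{v_1},\dotsc,x^{v_\ell})$; each $h_i$ has degree $d$ because its support lies in $d\simplex^\ell$. After substituting $y_m=x^{v_m}$, the master functions are $y^{\beta_j}h(y)^{\gamma_j}$ with $\ell$ coordinate factors $y_1,\dotsc,y_\ell$ and $n$ degree-$d$ polynomial factors $h_1,\dotsc,h_n$. So the simplex part of $\calA$ supplies the $\ell$ new variables, while $\calW$ supplies $n$ polynomials of degree $d$—the opposite of what you wrote. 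With this correct structure, the Jacobian computation (Lemma~\ref{L:Newton}) shows that $\prod_m y_m\cdot\prod_i h_i(y)$ times the relevant Jacobian is a polynomial of degree $2^{\ell-j}nd$; the factor $nd$ appears because $\prod_{i=1}^n h_i$ has degree $nd$. Iterating gives the B\'ezout-type count $2^{\binom{\ell}{2}}(nd)^\ell$. Your proposed structure ($n$ linear factors and a single degree-$d$ factor) would instead produce a clearing factor of degree $n+d$, and the iteration would not yield $n^\ell d^\ell$; the heuristic that ``the single degree-$d$ polynomial behaves like $d$ linear pieces'' does not correspond to any step of the actual argument.

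A secondary point: for the real bound the paper does not sum the positive estimate over the $2^n$ orthants of $(\R^\times)^n$. It works directly in the Gale-dual domain $\calM(\R)\subset(\R^\times)^\ell$ and picks up an extra factor $2^j$ in the bound for $\ubc_{\calM}(C_j)$ (Lemma~\ref{L:estimates}(3)), coming from the $2^j$ local chambers of $\calM(\R)$ near each boundary point of $\overline{C_j}$. Summing $\tfrac{1}{4}\cdot\tfrac{4^j}{j!}$ instead of $\tfrac{1}{4}\cdot\tfrac{2^j}{j!}$ is what turns $e^2$ into $e^4$.
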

%%%%%%%%%%%%%%%%%%%%%%%%%%%%%%%%%%%%%%%%%%%%%%%%%%%%%%%%%%%%%%%%%%%%%%%%%%%%

The bounds in Theorem~\ref{Th:Structured_bound} hold if the support of the system is only a
subset of $\calA$, for in that case, we may perturb the coefficients of the system to obtain
one whose support is exactly $\calA$ without decreasing the number of nondegenerate solutions.

When $n=d=\ell=2$, this bound for positive solutions is $83$.
This is already significant for the $(2,2)$-dense set~\eqref{Eq:triangle}.
Since the Kushnirenko bound for~\eqref{Eq:triangle} is 112,
a general system with support~\eqref{Eq:triangle} will have 112 complex solutions, at most 83
of which can be positive.

A generic system of $(d,\ell)$-dense fewnomials (as in~\eqref{Eq:near_polyhedral}),
 \begin{equation}\label{Eq:structured_system}
   f_1(x)\ =\ f_2(x)\ =\ \dotsb\ =\ f_n(x)\ =\ 0\,,
 \end{equation}
will have an invertible matrix of coefficients of the monomials 
$\{x^{w_i}\mid i=1,\dotsc,n\}$, and so we may solve~\eqref{Eq:structured_system} for these
monomials to get the equivalent system
 \begin{equation}\label{Eq:diagonal}
   x^{w_i}\ =\ \sum_{p\in d\ssimplex^\ell\cap\Z^\ell} a_{i,p} x^{\psi(p)}\,,
   \qquad\mbox{for}\quad i=1,\dotsc,n,.
 \end{equation}
For each $i=1,\dotsc,n$, define the degree $d$ polynomial in variables $y\in\R^\ell$,
\[
   \dcol{h_i(y)}\ :=\ \sum_{p\in d\ssimplex^\ell\cap\Z^\ell} a_{i,p} y^{p}\,.
\]

Following the notation in~\eqref{Eq:general_form}, we translate the set $\calA$ by $-v_0$,
which amounts to multiplying the equations~\eqref{Eq:structured_system}
and~\eqref{Eq:diagonal} by $x^{-v_0}$, and does not change their solutions.
Thus we may assume that $v_0=0$ so that $\psi$ is a linear map, and then let
$\calV:=\{v_1,\dotsc,v_\ell\}\subset\Z^n$ be the images of the standard basis vectors
under $\psi$. 
Then we have $x^{w_i}=h_i(x^{v_1},\dotsc,x^{v_\ell})$.
A linear relation among
the vectors in $\calV$ and $\calW$, 
 \begin{equation}\label{Eq:lin_reln}
   \sum_{m=1}^\ell b_m v_m\ +\ \sum_{i=1}^n c_i w_i\ =\ 0\,,
 \end{equation}
implies the multiplicative relation among the monomials
\[
   \prod_{m=1}^\ell (x^{v_m})^{b_m}\ \cdot\ \prod_{i=1}^n (x^{w_i})^{c_i}\ =\ 1\,.
\]
If we use~\eqref{Eq:diagonal} to first substitute $h_i(x^{v_1},\dotsc,x^{v_\ell})$
for $x^{w_i}$ for $i=1,\dotsc,n$ in this expression, 
and then substitute $y_m$ for $x^{v_m}$, for $m=1,\dotsc,\ell$, we obtain
\[
   \prod_{m=1}^\ell(y_m)^{b_m}\ \cdot\ \prod_{i=1}^n (h_i(y))^{c_i}\ =\ 1\,.
\]
Write $\beta=(b_1,\dotsc,b_\ell)$ for the vector of the coefficients of $\calV$
in~\eqref{Eq:lin_reln} and $\gamma=(c_1,\dotsc,c_n)$ for the vector of the coefficients of
$\calW$. 
Then we may write the left hand side of this last expression compactly as
$y^\beta\cdot h(y)^\gamma$.  
  
Now suppose that $(\beta_j,\gamma_j)\in\Z^\ell\oplus\Z^n$ for $j=1,\dotsc,\ell$ is a basis for
the $\Z$-module of linear relations among the vectors in $\calV\cup\calW$.
Then the system
 \begin{equation}\label{Eq:str_Gale}
    y^{\beta_j}\ \cdot\ h(y)^{\gamma_j}
    \ =\ 1\,,\qquad j=1,\dotsc,\ell
 \end{equation}
is a \demph{$(d,\ell)$-dense Gale system} dual to the original
system~\eqref{Eq:structured_system} of polynomials. 

%%%%%%%%%%%%%%%%%%%%%%%%%%%%%%%%%%%%%%%%%%%%%%%%%%%%%%%%%%%%%%%%%%%%%%%%%%%%
\begin{theorem}\label{Th:GaleDual}
 Let~$\eqref{Eq:structured_system}$ be a system of $(d,\ell)$-dense fewnomials
 and~$\eqref{Eq:str_Gale}$ be its corresponding dual $(d,\ell)$-dense Gale system. 
 Then the number of nondegenerate positive solutions to~$\eqref{Eq:structured_system}$ is equal
 to the number of nondegenerate positive solutions to~$\eqref{Eq:str_Gale}$ where
 $h_i(y)>0$ for each $i=1,\dotsc,n$. 

 If the exponents $\calA$ affinely span a sublattice of\/ $\Z^n$ of odd index and the relations  
 $(\beta^j,\gamma^j)\in\Z^\ell\oplus\Z^n$ for $j=1,\dotsc,\ell$ span a sub $\Z$-module  of odd index
 in the module of all linear relations, then the number of nondegenerate real solutions
 to~$\eqref{Eq:structured_system}$ is equal to the number of nondegenerate real solutions
 to~$\eqref{Eq:str_Gale}$ in $(\R^\times)^\ell$ where no $h_i(y)$ vanishes.
\end{theorem}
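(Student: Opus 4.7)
The plan is to construct an explicit bijection between the two solution sets, generalizing the Gale duality of~\cite{BS_Gale}. After multiplying~\eqref{Eq:structured_system} by $x^{-v_0}$, which preserves zeros in $(\R^\times)^n$, we may assume $v_0=0$ in~\eqref{Eq:general_form}, so that $\psi$ is linear, $\calV=\{v_1,\ldots,v_\ell\}$ is the image of the standard basis, and $h_i(x^{v_1},\ldots,x^{v_\ell})=x^{w_i}$ by~\eqref{Eq:diagonal}. Let $M\in\Z^{n\times(\ell+n)}$ be the matrix with columns $v_1,\ldots,v_\ell,w_1,\ldots,w_n$; since $\calA$ affinely spans $\R^n$, $M$ has rank $n$, and the integer lattice $\mathcal{L}$ generated by $\calV\cup\calW$ equals the $\Z$-span of $\calA$.

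For the positive statement, send $x\in\R^n_>$ to $y\in\R^\ell_>$ via $y_m:=x^{v_m}$. Then $h_i(y)=x^{w_i}>0$ by~\eqref{Eq:diagonal}, and for any integer relation $\sum b_m v_m+\sum c_i w_i=0$ we have $y^\beta h(y)^\gamma=x^0=1$, so $y$ solves~\eqref{Eq:str_Gale}. Conversely, given $y\in\R^\ell_>$ satisfying~\eqref{Eq:str_Gale} with all $h_i(y)>0$, seek $z\in\R^n_>$ with $z^{v_m}=y_m$ and $z^{w_i}=h_i(y)$. Taking logarithms this becomes the linear system $M^T\log z=(\log y_m,\log h_i(y))$; since $M$ has rank $n$ a solution is unique, and existence reduces to every integer relation on $\calV\cup\calW$ being satisfied by the right-hand side. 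As $\{(\beta_j,\gamma_j)\}$ is a $\Z$-basis of the relation lattice, the Gale equations deliver exactly this, yielding a unique $z=x$ that solves~\eqref{Eq:structured_system} by~\eqref{Eq:diagonal}. The two assignments are mutually inverse; nondegeneracy is preserved because the substitution $y_m=x^{v_m}$ has invertible Jacobian on $\R^n_>$, so the Jacobians of the original and Gale systems differ by invertible factors at corresponding points.

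For the real statement, write $x=\sigma\cdot|x|$ with $\sigma\in\{\pm1\}^n$ and apply the positive argument to the absolute values; only the sign correspondence remains. Identifying $\{\pm1\}^k\cong(\Z/2)^k$, the sign map $\sigma\mapsto(\sigma^{v_m},\sigma^{w_i})$ is the mod-$2$ reduction of $M^T$. The odd-index hypothesis on $\calA$ gives $\mathcal{L}+2\Z^n=\Z^n$, which makes this reduction injective with image $(L/2L)^\perp$, where $L:=\ker M$ is the integer relation lattice. The odd-index hypothesis on the $(\beta_j,\gamma_j)$ gives $L'+2L=L$ for their $\Z$-span $L'$, so the mod-$2$ reductions of $(\beta_j,\gamma_j)$ generate $L/2L$ and the Gale sign equations cut out precisely $(L/2L)^\perp$. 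Hence every Gale-admissible sign pattern lifts to a unique $\sigma\in\{\pm1\}^n$, completing the bijection. The main obstacle is this final $\Z/2$-linear bookkeeping: one must verify that the two odd-index hypotheses are exactly what is needed so that no spurious sign patterns appear on either side and no real solutions are double-counted, making the identification of each hypothesis with the corresponding exactness property the key technical point.
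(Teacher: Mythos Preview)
Your argument is correct and is essentially the paper's own proof, just presented computationally rather than geometrically. The paper (Section~\ref{S:gale}, culminating in Theorem~\ref{Th:Str_Gale}) factors everything through the ambient torus $\T^\ell\times\T^n$: the monomial map $\varphi\colon x\mapsto(x^{v_m},x^{w_i})$ identifies the fewnomial solutions with $\varphi(\T^n)\cap H$, while the graph parametrization $\Psi\colon y\mapsto(y,h(y))$ identifies the Gale solutions with the same intersection; the odd-index hypotheses are then invoked in one line each to guarantee that $\varphi$ is injective on $(\R^\times)^n$ and that the equations $y^{\beta}z^{\gamma}=1$ cut out exactly its image. Your logarithm argument for the positive case and your $\Z/2$-linear bookkeeping for the signs are precisely the content of those one-line invocations, made explicit; the bijection you build is $\Psi^{-1}\circ\varphi$.

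One small wording issue: the phrase ``the substitution $y_m=x^{v_m}$ has invertible Jacobian on $\R^n_>$'' is not quite right, since that map is $\R^n\to\R^\ell$ and its Jacobian is not square. What you want is that the full map $\varphi\colon x\mapsto(x^{v_m},x^{w_i})$ is a local diffeomorphism onto its image (its logarithm is the injective linear map $M^T$), and $\Psi$ is a diffeomorphism onto the graph $H$; the bijection between solution sets is then the restriction of a diffeomorphism, which is what preserves nondegeneracy. The paper phrases this as a scheme-theoretic isomorphism.
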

%%%%%%%%%%%%%%%%%%%%%%%%%%%%%%%%%%%%%%%%%%%%%%%%%%%%%%%%%%%%%%%%%%%%%%%%%%%%

This follows from Theorem~\ref{Th:Str_Gale} on  Gale duality for $(d,\ell)$-dense fewnomials. 
Thus we may prove Theorem~\ref{Th:Structured_bound} by establishing bounds for
$(d,\ell)$-dense Gale systems.\smallskip

Let us consider an example of this duality.
%%%%%%%%%%%%%%%%%%%%%%%%%%%%%%%%%%%%%%%%%%%%%%%%%%%%%%%%%%%%%%%%%%%%%%%%%%
%                     2                                    4
%      27         16 t        2         4       4  2   16 t
% f:= ---- + 31 - ----- - 16 t  u + 40 t  - 16 t  u  - -----
%       5           u                                    2
%      t                                                u
%
%                                                2      4
%               4  2       2        4        32 t    5 t
% g:= 12 t + 5 t  u  - 32 t  u + 6 t  + 40 - ----- + ----
%                                              u       2
%                                                     u
%%%%%%%%%%%%%%%%%%%%%%%%%%%%%%%%%%%%%%%%%%%%%%%%%%%%%%%%%%%%%%%%%%%%%%%%%%
The system of Laurent polynomials,
 \begin{equation}\label{Eq:Laurent_system}
  \begin{array}{rcl}
   \Color{1 0 1 0.4}{f}\ :=\ 
      27t^{-5}\ +\ 31 - 16t^2u - 16t^2u^{-1} -16t^4u^2+ 40t^4-16t^4u^{-2}&=& 0\,,\\
   \Color{0 1 1 0.3}{g}\ :=\ 
   12t\hspace{15.5pt}+\ 40 -32t^2u -32t^2u^{-1} 
      +\ \,5t^4u^2+\ \,6t^4+\ \,5t^4u^{-2}&=& 0\,,\rule{0pt}{14pt}
  \end{array}
 \end{equation}
has 36 complex solutions with nonzero coordinates,
ten of which are real and eight of which lie in the positive quadrant.
We show the curves $\Color{1 0 1 0.4}{f=0}$ and $\Color{0 1 1 0.3}{g=0}$ defined by the
polynomials~\eqref{Eq:Laurent_system}.
In the picture on the left, the horizontal scale has been exaggerated.
Its shaded region is shown on the right, where now the vertical scale is exaggerated.
 \begin{equation}\label{Eq:Laurent_Curves}
  %
  %  The pictures  poly_big.eps  poly_sm.eps wer made with poly.maple
  %  
  \raisebox{-60pt}{
  \begin{picture}(240,120)
    \put(0,0){\includegraphics[height=120pt]{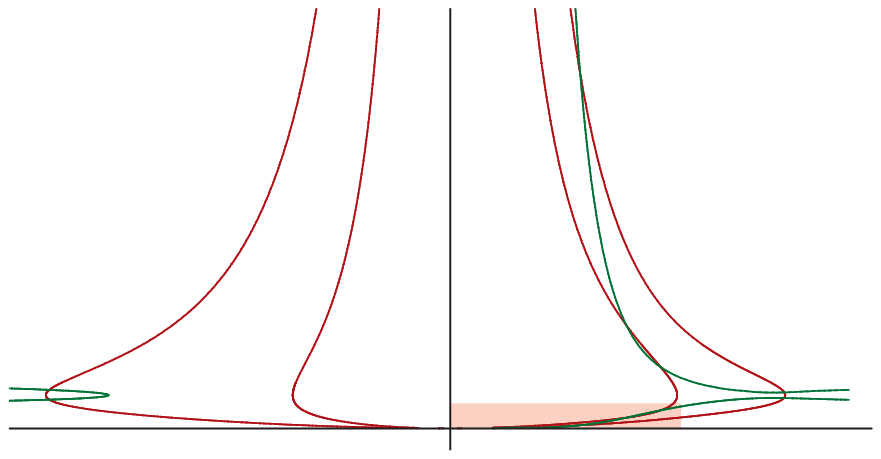}}

     \put(236,3){$t$} \put(121,110){$u$}

     \put(68,88){$\Color{0 1 1 0.3}{g}$}  \put(88,88){$\Color{0 1 1 0.3}{g}$} 
     \put(138,88){$\Color{0 1 1 0.3}{g}$} \put(180,44){$\Color{0 1 1 0.3}{g}$} 

    \put(1,49){$\Color{1 0 1 0.4}{f}$} \put(4,44){\Color{1 0 1 0.4}{\vector(0,-1){25}}}
    \put(228,14){$\Color{1 0 1 0.4}{f}$} 
  \end{picture}
   \qquad
   \begin{picture}(141,120)
    \put(133,6){$t$} \put(13,110){$u$}

    \put(0,0){\includegraphics[height=120pt]{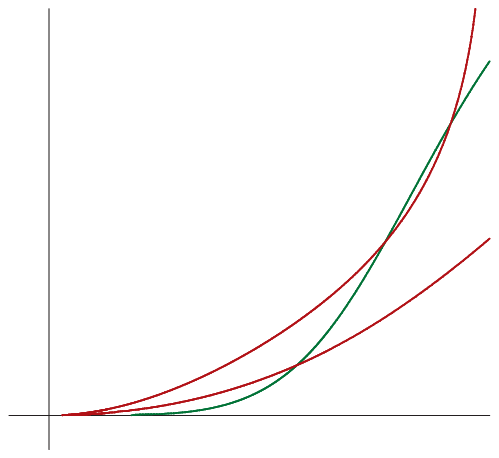}}
    \put(60,31){$\Color{0 1 1 0.3}{g}$}  \put(117,40){$\Color{0 1 1 0.3}{g}$} 
    \put(133,104){$\Color{1 0 1 0.4}{f}$} 
  \end{picture}}
 \end{equation}
Here are numerical approximations to the ten real solutions.
%
%  ( 0.618606849691311658,  0.093372960970825121),
%  ( 0.618606849691263868, 10.709738553888799964),
%  ( 0.839229221551949613,  0.322522650037381482),
%  ( 0.839229221551949613,  3.100557433358855776),
%  ( 1.002766398308815954,  0.542683175041158803),
%  ( 1.002766398308815954,  1.842695786402732767),
%  ( 1.590731244171416770,  0.911229703678423659),
%  ( 1.590731244171416770,  1.097418132840963344),
%  ( -1.91103170904055050,  0.863512949510988271),
%  ( -1.91103170904055050,  1.158060224303879904),
%
\begin{eqnarray*}
  &( 0.619,  0.093),\,
   ( 0.839,  0.326),\,
   ( 1.003,  0.543),\,
   ( 1.591,  0.911),\,
   (-1.911,  0.864),\,&\\
  &( 0.619, 10.71),\,
   ( 0.839,  3.101),\,
   ( 1.003,  1.843),\,
   ( 1.591,  1.097),\,
   (-1.911,  1.158)\,.&
\end{eqnarray*}
(The repetition in the $t$-coordinates is explained by the symmetry $u\mapsto u^{-1}$
of~\eqref{Eq:Laurent_system}.) 

The system~\eqref{Eq:Laurent_system} is a system of $(2,2)$-dense fewnomials, as we may see  
from its support.
 \[
  \begin{picture}(206,108)
   \put(0,0){\includegraphics{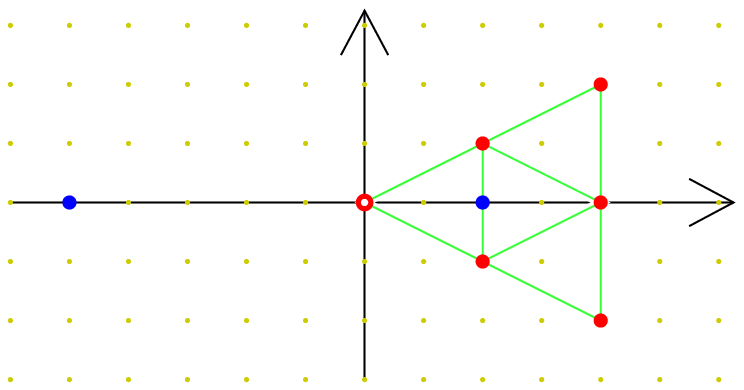}}
   \put(197,65){$t$}   \put(89,95){$u$}

   \put(129  ,76){$v_2$}  \put(131,28){$v_1$}
   \put(124.5,46){$w_2$}     \put( 14,45){$w_1$}
  \end{picture}
 \]
 If we solve~\eqref{Eq:Laurent_system} for the monomials $t^{-5}$ and $t$, we obtain
 \begin{equation}\label{Eq:solved_system}
  \begin{array}{rcl}
   t^{-5} &=&\frac{1}{27}(-\ 31 + 16t^2u + 16t^2u^{-1}+16t^4u^2- 40t^4+16t^4u^{-2})
     \ =:\ h_1(t^2u,t^2u^{-1})\\
   t\hspace{11pt}&=&\frac{1}{12}(-\ 40 +32t^2u + 32t^2u^{-1}  
               -\ \,5t^4u^2-\ \,6t^4-\ \hspace{1.5pt}5t^4u^{-2})
     \ =:\ h_2(t^2u,t^2u^{-1}) \,.\rule{0pt}{14pt}
  \end{array}
 \end{equation}
We convert this into a $(d,\ell)$-dense Gale system dual to~\eqref{Eq:Laurent_system}. 
First observe that 
 \begin{equation}\label{Eq:monoms}
  \begin{array}{rcl}
    \left(t^2u\right)^{\Red{1}}  \left(t^2u^{-1}\right)^{\Red{1}} 
    \left(t^{-5}\right)^{\Red{1}}  \left(t\right)^{\Red{1}} \hspace{6.5pt}
   &=&1\\
     \left(t^2u\right)^{\Red{2}}   \left(t^2u^{-1}\right)^{\Red{2}}
     \left(t^{-5}\right)^{\Red{1}}  \left(t\right)^{\Red{-3}} 
   &=&1\,.\rule{0pt}{14pt}
  \end{array}
 \end{equation}
We use the equations~\eqref{Eq:solved_system} to replace the monomials
$t^{-5}$ and $t$ in~\eqref{Eq:monoms} and then apply the substitutions
$x:=t^2u$ and $y:=t^2u^{-1}$ (so that $x^2=t^4u^2$,  $xy=t^4$, and $y^2=t^4u^{-2}$).
Then, after clearing denominators and
rearranging, we have 
%
%h1:= -(2*(2+x-2*y)*(2+y-2*x) - 1/4)*4/27:
%h2:= -(((x-2)+(y-2))^2/1^2 + ((x-2)-(y-2))^2/2^2 -6)/3:
%
 \begin{equation}\label{Eq:Gale}
  \begin{array}{rcl}
    x^{\Red{1}}  y^{\Red{1}}  h_1(x,y)^{\Red{1}} h_2(x,y)^{\Red{1}}
    \ -\ 1
   &=&0   \\
    x^{\Red{2}}  y^{\Red{2}} h_1(x,y)^{\Red{1}}
     \ -\   h_2(x,y)^{\Red{3}}
    &=&0   \,.\rule{0pt}{15pt}
  \end{array}
 \end{equation}
This system has 36 complex solutions, ten of which are real and eight of which lie in the
shaded region in the picture below where $x,y,h_1(x,y),$ and $h_2(x,y)$ are all positive.
 \begin{equation}\label{Eq:Gale_Curves}
  \raisebox{-90pt}{
   \begin{picture}(275,190)(-70,0)
    \put(0,0){\includegraphics{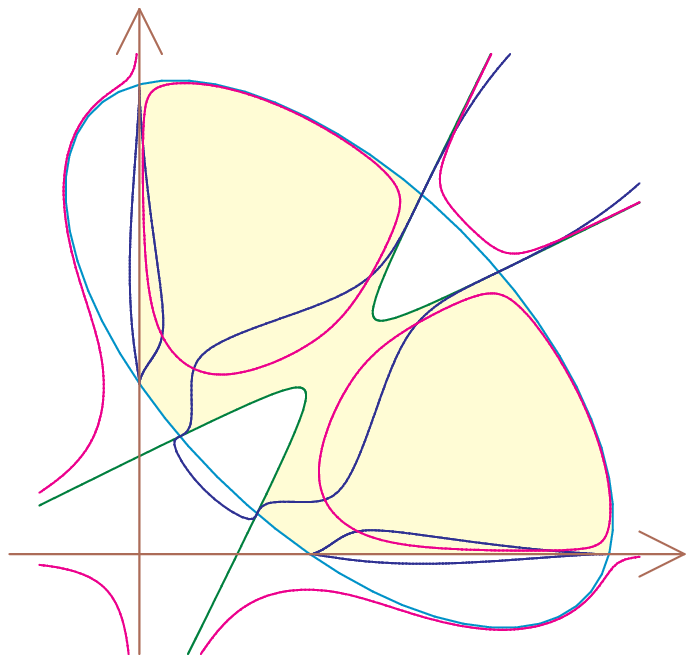}}
    \put(-62.5,65){$h_2(x,y)=0$}
     \put(0,68){\vector(1,0){45}}

    \put(-62.5,47.6){$h_1(x,y)=0$}    \put(141,150){$h_1(x,y)=0$}
      \put(155,145){\vector(-1,-1){44}}
     \put(0,50.5){\vector(4,-1){67}}

    \put(71,3){$x=0$} \put(-30,26){$y=0$}
     \put(69,6){\vector(-1,0){29}}
   \end{picture}
  }
 \end{equation}
The numbers of complex, real, and suitably positive solutions to the
two systems~\eqref{Eq:Laurent_system} and~\eqref{Eq:Gale} 
is a consequence of Theorem~\ref{Th:Str_Gale} on
structured Gale duality.
Here are numerical approximations to the ten real solutions of~\eqref{Eq:Gale} 
%
%(4.09834314459169519, 0.0357314450357013255),
%(2.18374023103512228, 0.2271545363840307911),
%(1.85290514949348037, 0.5456898838088880998),
%(0.54568988380888810, 1.8529051494934803705),
%(0.22715453638403079, 2.1837402310351222833),
%(2.77693525679463609, 2.3057992350030270159),
%(2.30579923500302707, 2.7769352567946360943),
%(4.22928480114469332, 3.1535857257801265684),
%(0.03573144503570132, 4.0983431445916951923),
%(3.15358572578012656, 4.2292848011446929069)
%
\begin{eqnarray*}
   &(4.229, 3.154),\,
    (4.098, 0.036),\,
    (2.777, 2.306),\,
    (2.184, 0.227),\,
    (1.853, 0.546),\,&
   \\
   &(3.154, 4.229),\,
    (0.036, 4.098),\,
    (2.306, 2.777),\,
    (0.227, 2.184),\,
    (0.546, 1.853)\,.&
\end{eqnarray*}

We remark that there is no relation between the two pairs of curves
in~\eqref{Eq:Laurent_Curves} and~\eqref{Eq:Gale_Curves}.
Gale duality only
asserts a scheme-theoretic equality between the points of intersection of each pair of curves.

%%%%%%%%%%%%%%%%%%%%%%%%%%%%%%%%%%%%%%%%%%%%%%%%%%%%%%%%%%%%%%%%%%%%%%%%%%%%
%
\section{Gale Duality for $(d,\ell)$-dense fewnomials}\label{S:gale}

Gale duality~\cite{BS_Gale} asserts that a system of $n$
polynomials in $n$ variables involving a total of $1{+}k{+}n$ distinct monomials is
equivalent to a system of $k$ rational functions of a particular form in the complement of
an arrangement of $k{+}n$ hyperplanes in $\R^k$.
A modification of Gale duality asserts that a system of $(d,\ell)$-dense fewnomials is
equivalent to a system $\ell\; (\leq k)$ rational functions in the complement $\calM(\R)$ of the 
coordinate axes of $\R^\ell$ and of $n$ degree $d$ hypersurfaces.
We will call such a system a $(d,\ell)$-dense Gale system.
Write $\dcol{\T}$ for the non-zero complex numbers, $\C^\times$.

%%%%%%%%%%%%%%%%%%%%%%%%%%%%%%%%%%%%%%%%%%%%%%%%%%%%%%%%%%%%%%%%%%%%%%%%%%%%
%
\subsection{$(d,\ell)$-dense polynomials}
\label{S:nearpolyhedral}

Suppose that $\calA=\psi(d\simplex^\ell\cap\Z^\ell)\cup\calW$ is $(d,\ell)$-dense and
that it affinely spans $\Z^n$.
Translating $\calA$ by $\psi(0)$ if necessary, we may assume that $\psi$ is linear.
Write $\calV=\{v_1,\dotsc,v_\ell\}\subset\Z^n$ for the images under $\psi$ of the standard
basis vectors of $\Z^\ell$ and list the elements of $\calW$ as $\{w_1,\dotsc,w_n\}$.
Consider the map
\[
  \begin{array}{rclcl}
   \varphi&\colon&\T^n&\longrightarrow&\T^\ell\times\T^n\\
   &&x&\longmapsto&(x^{v_1},\dotsc,x^{v_\ell},\,x^{w_1},\dotsc,x^{w_n})\,.
   \rule{0pt}{14pt}
  \end{array}
\]

Write $y=(y_1,\dotsc,y_\ell)$ for the coordinates of the first factor $\T^\ell$ and
$z=(z_1,\dotsc,z_n)$ for the coordinates of the second factor $\T^n$.
A polynomial with support $\calA$ has the form
 \begin{eqnarray}
  f&=& \sum_{\substack{\lambda=(\lambda_1,\dotsc,\lambda_\ell)\\|\lambda|=d}}
         a_\lambda x^{\lambda_1 v_1}\dotsb x^{\lambda_\ell v_\ell}\ +\ 
        \sum_{i=1}^n a_ix^{w_i}\nonumber\\
   &=& \varphi^*\Bigl(\sum_{|\lambda|=d} a_\lambda y_1^{\lambda_1}\dotsb y_\ell^{\lambda_\ell}\ +\ 
        \sum_{i=1}^n a_i z_i\Bigr) 
  \ =\  \varphi^*(h(y) + \Lambda(z))\,,\label{Eq:pullback}
 \end{eqnarray}
the pullback along $\varphi$ of a polynomial $h$ of degree $d$ and a linear form $\Lambda$. 

%%%%%%%%%%%%%%%%%%%%%%%%%%%%%%%%%%%%%%%%%%%%%%%%%%%%%%%%%%%%%%%%%%%%%%%%%%%%
%
\subsection{Proof of Theorem~\ref{Th:GaleDual}}

Let $\calA$ and $\calW$ be as in the previous subsection.
Suppose that $f_1(x)=\dotsb=f_n(x)=0$ is a system of $(d,\ell)$-dense fewnomials with
support $\calA$.
By~\eqref{Eq:pullback}, there exist polynomials $h_1(y),\dotsc,h_n(y)$ of degree $d$ in
the variables $y=(y_1,\dotsc,y_\ell)$ and linear forms $\Lambda_1(z),\dotsc,\Lambda_n(z)$
in variables $z=(z_1,\dotsc,z_n)$ such that
\[
   f_i(x)\ =\ \varphi^*( h_i(y) + \Lambda_i(z))
   \qquad i=1,\dotsc,n\,.
\]
Since we wish to enumerate non-degenerate solutions, we may assume that the polynomials
$h_i(y),\Lambda_i(z)$ are generic, for perturbing the coefficients of the $f_i$ can only
increase their number of non-degenerate solutions.

Thus we may assume that $\Lambda_1(z),\dotsc,\Lambda_n(z)$ are linearly independent.
Replacing the polynomials $f_1,\dotsc,f_n$ by appropriate linear combinations, we may
assume that $\Lambda_i(z)=-z_i$ for each $i$.
Then our system becomes
 \begin{equation}\label{Eq:graph_pullback}
   \varphi^*(h_1(y)-z_1)\ =\ 
   \varphi^*(h_2(y)-z_2)\ =\ \dotsb\ =\ 
   \varphi^*(h_n(y)-z_n)\ =\ 0\,.
 \end{equation}

If we define $H\subset\C^\ell\times\C^n$ by the equations
\[
   H\ =\ \{(y,z)\in\C^\ell\times\C^n\mid 
      z_1=h_1(y),\dotsc,z_n=h_n(y)\}\,,
\]
then our system~\eqref{Eq:graph_pullback} has the alternative geometric description
as $\varphi^*(H)$.
Since $\Z\calA=\Z^n$, $\varphi$ is an isomorphism onto its image, and we deduce the
following lemma.

\begin{lemma}
 The system~$\eqref{Eq:graph_pullback}$ is isomorphic to the intersection
 $\varphi(\T^n)\cap H$ in $\C^\ell\times\C^n$.
\end{lemma}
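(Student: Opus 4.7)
The plan is to verify the isomorphism by unpacking the definition of pullback and then invoking the hypothesis $\Z\calA = \Z^n$ to promote $\varphi$ to a closed immersion.

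First I would observe that by the definition of $\varphi$ and of pullback, for each $i$ we have
\[
   \varphi^*(h_i(y) - z_i)(x) \;=\; h_i(x^{v_1},\dotsc,x^{v_\ell}) - x^{w_i}.
\]
Hence a point $x \in \T^n$ satisfies the system~\eqref{Eq:graph_pullback} if and only if, for every $i$, the identity $h_i(x^{v_1},\dotsc,x^{v_\ell}) = x^{w_i}$ holds; by the definition of $H$, this is precisely the condition $\varphi(x) \in H$, which automatically means $\varphi(x) \in \varphi(\T^n) \cap H$. Thus the solution set of~\eqref{Eq:graph_pullback} is $\varphi^{-1}(\varphi(\T^n) \cap H)$.

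Next I would use the assumption $\Z\calA = \Z^n$. After the translation that makes $\psi$ linear, $\calV \cup \calW \subset \calA$ generates the same lattice as $\calA$, so $\calV \cup \calW$ spans $\Z^n$ over $\Z$. The map $\varphi\colon \T^n \to \T^\ell \times \T^n$ corresponds on character lattices to the homomorphism $\Z^\ell \oplus \Z^n \to \Z^n$ sending the standard basis vectors to $v_1,\dotsc,v_\ell,w_1,\dotsc,w_n$; the spanning condition says this homomorphism is surjective. Surjectivity on characters translates, in the dual category of tori, into $\varphi$ being a closed immersion, i.e., an isomorphism of $\T^n$ onto the subtorus $\varphi(\T^n) \subset \T^\ell \times \T^n$.

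Combining the two steps, the solution scheme of~\eqref{Eq:graph_pullback} is $\varphi^{-1}(H)$, and because $\varphi$ is a closed immersion this is identified scheme-theoretically with $\varphi(\T^n) \cap H$ inside $\C^\ell \times \C^n$, which is exactly the claim. The main subtlety to be careful about is the scheme-theoretic nature of the identification (so that nondegenerate solutions correspond to nondegenerate intersection points, with multiplicities preserved); this is exactly what is secured by the closed-immersion property, rather than the weaker set-theoretic bijectivity of $\varphi$ onto its image.
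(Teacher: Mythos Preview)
Your proposal is correct and follows essentially the same approach as the paper. The paper's argument is in fact just the single sentence preceding the lemma---``Since $\Z\calA=\Z^n$, $\varphi$ is an isomorphism onto its image''---together with the earlier remark that the system~\eqref{Eq:graph_pullback} is $\varphi^*(H)$; your write-up simply unpacks these two observations in more detail, including the verification that $\Z(\calV\cup\calW)=\Z\calA$ and the character-lattice interpretation of the closed-immersion property.
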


This is the first step in Gale duality for $(d,\ell)$-dense fewnomials.
For the second step observe that $H$ is isomorphic to $\C^\ell$, as it
is the graph of the function $\C^\ell\to\C^n$ given by $y\mapsto(h_1(y),\dotsc,h_n(y))$.
Let $\Psi\colon\C^\ell\to H$ be the isomorphism between $\C^\ell$ and this graph.
Then the system~\eqref{Eq:graph_pullback} is equivalent to
\[
   \varphi(\T^n)\cap \Psi(\C^\ell)\,.
\]
We determine the equations in $\T^\ell\times\T^n$ that define $\varphi(\T^n)$.

For $\beta=(b_1,\dotsc,b_\ell)\in\Z^\ell$ and $\gamma=(c_1,\dotsc,c_n)\in\Z^n$, let
\[
   y^\beta\cdot z^\gamma\ =\ y_1^{\beta_1}\dotsb y_\ell^{\beta_\ell}
    \cdot z_1^{c_1}\dotsb z_n^{c_n}\,.
\]
We similarly write $h(y)^\gamma$ for $h_1(y)^{c_1}\dotsb h_n(y)^{c_n}$.

Suppose that $\calB\subset\Z^{\ell}\oplus\Z^n$
is a basis for the $\Z$-linear relations among the exponent vectors $\calV\cup\calW$.
As $\calA$ spans $\R^n$, so does $\calV\cup\calW$, and so $\calB$ consists of
$\ell$ vectors, $\{(\beta_1,\gamma_1),\dotsc,(\beta_\ell,\gamma_\ell)\}$.
Then the image
$\varphi(\T^{n})\subset \T^\ell\times \T^n$ is
the subtorus defined by 
 \begin{equation}\label{Eq:sGale}
   y^{\beta_j}\cdot  z^{\gamma_j}\ =\ 1 \qquad\mbox{for } j=1,\dotsc,\ell\,.
 \end{equation}
%

%%%%%%%%%%%%%%%%%%%%%%%%%%%%%%%%%%%%%%%%%%%%%%%%%%%%%%%%%%%%%%%%%%%%%%%%%%%%%%%%%
\begin{proposition}\label{prop:npGale}
  The pullback of $\varphi(\T^{n})\cap H$ along the map
  $\Psi$ is the system
 \begin{equation}\label{Eq:npmf} 
   y^{\beta_j} \cdot h(y)^{\gamma_j}\ =\ 1
    \qquad\mbox{for } j=1,\dotsc,\ell\,.
 \end{equation}
  This is well-defined in $\C^{\ell}$ in the complement $\calM(\C)$ of the 
  coordinate planes and the hypersurfaces $h_i(y)=0$ for $i=1,\dotsc,n$.
\end{proposition}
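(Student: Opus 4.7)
The plan is to verify the proposition by a direct substitution along the explicit isomorphism $\Psi$, checking that the domain of definition is precisely $\calM(\C)$.

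First I would record the explicit description of $\Psi$: the graph of $y \mapsto (h_1(y),\ldots,h_n(y))$ gives the parametrization
\[
   \Psi\colon \C^\ell \longrightarrow H, \qquad
   y \longmapsto \bigl(y,\, h_1(y),\ldots, h_n(y)\bigr),
\]
with inverse the projection onto the first $\ell$ coordinates. Thus $\Psi$ is a biregular isomorphism of $\C^\ell$ with $H \subset \C^\ell \times \C^n$.

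Next I would pull back the defining equations~\eqref{Eq:sGale} of $\varphi(\T^n)$ through $\Psi$. Since a point $(y,z) \in H$ satisfies $z_i = h_i(y)$ for all $i$, the substitution $z \leftarrow h(y)$ turns each binomial $y^{\beta_j} z^{\gamma_j} = 1$ into
\[
   y^{\beta_j} \cdot h(y)^{\gamma_j} \;=\; 1, \qquad j = 1,\ldots,\ell,
\]
which is exactly~\eqref{Eq:npmf}. Conversely, any $y$ satisfying these equations produces a point $\Psi(y)$ in $H$ that also satisfies~\eqref{Eq:sGale}, hence lies in $\varphi(\T^n) \cap H$. This gives the scheme-theoretic equality of the pullback $\Psi^{-1}\bigl(\varphi(\T^n) \cap H\bigr)$ with the vanishing locus of~\eqref{Eq:npmf}.

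Finally I would address well-definedness. Since the relations $(\beta_j,\gamma_j) \in \Z^\ell \oplus \Z^n$ may have negative entries, the expression $y^{\beta_j} h(y)^{\gamma_j}$ only makes sense where every $y_m$ and every $h_i(y)$ is nonzero. Equivalently, $\Psi(y)$ lies in $\T^\ell \times \T^n$ precisely when $y_m \neq 0$ for all $m$ and $h_i(y) \neq 0$ for all $i$; this is the defining condition of the complement $\calM(\C)$ of the coordinate hyperplanes in $\C^\ell$ together with the hypersurfaces $\{h_i(y) = 0\}$. The only mild subtlety, and the one place some care is needed, is to check that the correspondence is bijective on points and preserves multiplicities; this follows because $\Psi$ is an isomorphism and because $\varphi(\T^n) \cap H \subset \T^\ell \times \T^n$ ensures that both $y$ and $z = h(y)$ are invertible, matching the domain $\calM(\C)$. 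No further identification or transversality argument is required.
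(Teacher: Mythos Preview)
Your proposal is correct and matches the paper's approach; the paper does not give a separate proof of this proposition, treating it as immediate from the preceding discussion (the description of $\Psi$ as the graph isomorphism and the equations~\eqref{Eq:sGale} for $\varphi(\T^n)$), and your argument simply makes that substitution explicit.
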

%%%%%%%%%%%%%%%%%%%%%%%%%%%%%%%%%%%%%%%%%%%%%%%%%%%%%%%%%%%%%%%%%%%%%%%%%%%%%%%%%

We may now state our main theorem on structured Gale duality.
The \demph{saturation} of a submodule $\calB\subset\Z^{\ell}\oplus\Z^n$ is the set
$(\calB\otimes_\Z\R) \cap \Z^{\ell}\oplus\Z^n$ of integer points in its linear span.

%%%%%%%%%%%%%%%%%%%%%%%%%%%%%%%%%%%%%%%%%%%%%%%%%%%%%%%%%%%%%%%%%%%%%%%%%%%%%%%%%
\begin{theorem}\label{Th:Str_Gale}
  Suppose that $\calA,\calW,\psi$, and $\calV$ are as above and that $\calA$ 
  spans $\Z^{n}$.
  Then the solution set to~\eqref{Eq:graph_pullback} in $\T^{n}$
  is scheme-theoretically isomorphic to the solution set of the system of
  rational functions~\eqref{Eq:npmf} defined  in $\T^\ell$ in the
  complement of the hypersurfaces $h_i(y)=0$, for $i=1,\dotsc,n$.

  If the coefficients of the polynomials $f_i$ are real, then so are those of $h_i(y)$.
  If the span of $\calA$ has odd index in $\Z^{n}$ and the integer span of the
  exponents $\calB$ has odd index in its saturation, then the analytic subscheme 
  defined in $(\R^\times)^{n}$ by~\eqref{Eq:graph_pullback} is isomorphic to the analytic
  subscheme defined by~\eqref{Eq:npmf} in the complement of the
  hypersurfaces $h_i(y)=0$ in $(\R^\times)^\ell$.

  If now the exponents $\calA$ only span a full rank sublattice of
  $\Z^{n}$ and the exponents $\calB$ only span a full rank sublattice of
  the module  of linear relations among $\calV\cup\calW$, then the analytic subscheme of\/ 
  $\R_{>0}^{n}$ defined by~\eqref{Eq:graph_pullback} is isomorphic to the analytic subscheme 
  defined by~\eqref{Eq:npmf} in the subset of\/ $\R_{>0}^\ell$ defined by $h_i(y)>0$ for
  $i=1,\dotsc,n$. 
\end{theorem}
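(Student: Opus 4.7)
The plan is to combine the lemma (which identifies the solution set of~\eqref{Eq:graph_pullback} in $\T^n$ with $\varphi(\T^n)\cap H$) with Proposition~\ref{prop:npGale} (which pulls that intersection back via $\Psi$ to obtain~\eqref{Eq:npmf}), then extend to the real and positive cases by carefully tracking when the toric maps and their defining equations survive the restriction to $(\R^\times)^n$ and $\R_{>0}^n$.

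For the first assertion, I would argue that since $\calA$ spans $\Z^n$ and (after the translation normalizing $\psi$) the lattice $\Z\calA$ coincides with $\Z\langle \calV\cup\calW\rangle$, the monomial map $\varphi$ is a closed embedding of tori $\T^n\hookrightarrow\T^\ell\times\T^n$ whose image is exactly the subtorus~\eqref{Eq:sGale}. The lemma then gives a scheme-theoretic isomorphism between the zero locus of~\eqref{Eq:graph_pullback} and $\varphi(\T^n)\cap H$, and since $\Psi\colon\C^\ell\to H$ is an algebraic isomorphism, pulling back along $\Psi$ yields the system~\eqref{Eq:npmf} on $\calM(\C)$ as in Proposition~\ref{prop:npGale}.

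For the real, odd-index case, I would run the same argument in the real-analytic category. The restriction $\varphi\colon(\R^\times)^n\to(\R^\times)^{\ell+n}$ has kernel $\Hom(\Z^n/\Z\calA,\{\pm 1\})$, which is trivial precisely when the index $[\Z^n:\Z\calA]$ is odd; this yields injectivity. Dually, the image $\varphi((\R^\times)^n)$ is cut out in $(\R^\times)^{\ell+n}$ by the relations $y^\mu z^\nu=1$ as $(\mu,\nu)$ ranges over the saturation of $\calB$, and the odd-index hypothesis on $\calB$ lets me replace these by the finite system~\eqref{Eq:sGale} indexed by $\calB$ itself (since any 2-torsion obstruction to this replacement would force an even index). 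Combining these two odd-index facts, the same chain of isomorphisms as in the complex case goes through verbatim, now in the analytic category over $(\R^\times)^\ell$ with the hypersurfaces $h_i(y)=0$ removed. The positive case is then simpler: passing to logarithmic coordinates turns $\varphi$ into an $\R$-linear map and all multiplicative relations into linear equations, so only the $\R$-span of $\calA$ (resp.\ of $\calB$) matters — any full-rank sublattice suffices to make $\varphi|_{\R_{>0}^n}$ a diffeomorphism onto its image and to cut that image out by~\eqref{Eq:sGale} inside $\R_{>0}^{\ell+n}$.

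The main obstacle I expect is the second assertion: one must verify precisely that the odd-index hypothesis on $\calB$ in the saturation is what is needed to pass from the (typically infinite) system of equations defining $\varphi((\R^\times)^n)$ in $(\R^\times)^{\ell+n}$ down to the finite system~\eqref{Eq:sGale}. Equivalently, one has to show that if $(\mu,\nu)\in\mathrm{sat}(\calB)$ and $N(\mu,\nu)\in\calB_\Z$ for odd $N$, then $y^\mu z^\nu = 1$ already follows from~\eqref{Eq:sGale} on the real locus, which uses that $t\mapsto t^N$ is a bijection of $\R^\times$ for odd $N$. Once this combinatorial 2-torsion bookkeeping is settled, the three statements assemble into one proof and the complex statement drops out as the case where the index issues are vacuous.
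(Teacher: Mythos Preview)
Your proposal is correct and follows essentially the same route as the paper: combine the lemma with Proposition~\ref{prop:npGale}, use that $\Z\calA=\Z^n$ to get $\varphi$ injective for the complex statement, then invoke the odd-index hypotheses to guarantee both injectivity of $\varphi$ on $(\R^\times)^n$ and that~\eqref{Eq:sGale} cuts out $\varphi((\R^\times)^n)$ for the real statement, with the positive statement following because only the real span matters on $\R_{>0}$. Your discussion of the $2$-torsion bookkeeping is in fact more explicit than the paper's own argument, which simply asserts these two consequences of the odd-index hypotheses without further comment.
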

%%%%%%%%%%%%%%%%%%%%%%%%%%%%%%%%%%%%%%%%%%%%%%%%%%%%%%%%%%%%%%%%%%%%%%%%%%%%%%%%%

%%%%%%%%%%%%%%%%%%%%%%%%%%%%%%%%%%%%%%%%%%%%%%%%%%%%%%%%%%%%%%%%%%%%%%%%%%%%%%%%%
\begin{proof}
 The first statement concerning complex solutions is immediate from
 Proposition~\ref{prop:npGale} and the observation that the system~\eqref{Eq:graph_pullback} is
 the pullback of the intersection
 $\varphi(\T^n)\cap H$  along the  
 map $\varphi$, if we know that the map $\varphi$ is injective.
 Since $\calA$ spans $\Z^{n}$, so does $\calV\cup\calW$, and the map 
 $\varphi$ is injective.

 As the affine span of $\calA$ has odd index in  $\Z^{n}$, the map
 $\varphi$ is injective on $(\R^\times)^{n}$.
 As $\Z\calB$ has odd index in its saturation, the equations  
 $y^\beta\cdot z^\gamma=1$ for $(\beta,\gamma)\in\calB$ define the image
 $\varphi((\R^\times)^{n})$ in the real torus
 $(\R^\times)^\ell\times(\R^\times)^n$. 
 These facts in turn imply the second statement.
 
 Similarly, the hypotheses of the third statement imply the same facts about the 
 positive part of the real torus, $\R_{>0}^{n}$.
 Observing that the subset of $\R_{>0}^\ell$ defined by $h_i(y)>0$ for $i=1,\dotsc,n$ is the
 pullback of $\R_{>0}^\ell\times\R_{>0}^n$ under the map $\Psi$ completes the proof.
\end{proof} 
%%%%%%%%%%%%%%%%%%%%%%%%%%%%%%%%%%%%%%%%%%%%%%%%%%%%%%%%%%%%%%%%%%%%%%%%%%%%%%%%%

%%%%%%%%%%%%%%%%%%%%%%%%%%%%%%%%%%%%%%%%%%%%%%%%%%%%%%%%%%%%%%%%%%%%%%%%%%%%
%
\section{Bounds for $(d,\ell)$-dense Gale systems}\label{S:bounds}

By Theorem~\ref{Th:GaleDual}, Theorem~\ref{Th:Structured_bound} follows from 
bounds for $(d,\ell)$-dense Gale systems, which we give and prove  below.

Let $d,\ell,n$ be positive integers and $y_1,\dots,y_\ell$ be indeterminates.
Suppose that $h_i(y)$ for $i=1,\dots,n$ are generic degree $d$ polynomials.
Define 
 \begin{eqnarray*}
   \dcol{\Delta}&:=&\{y\in\R^\ell_{>0}\mid h_i(y)>0\ \mbox{for}\ i=1,\dots,n\}\,\qquad\mbox{and}\\
   \dcol{\calM(\R)}&:=&\{y\in(\R^\times)^\ell\mid h_i(y)\neq0\ \mbox{for}\ i=1,\dots,n\}\,,
 \end{eqnarray*}
and write $\calM(\C)$ for the complexification of $\calM(\R)$.

%%%%%%%%%%%%%%%%%%%%%%%%%%%%%%%%%%%%%%%%%%%%%%%%%%%%%%%%%%%%%%%%%%%%%%
\begin{theorem}\label{TH:main}
 With these definitions, suppose that 
 $\beta_1,\dotsc,\beta_\ell\in\Z^\ell$ and $\gamma_1,\dotsc,\gamma_\ell\in\Z^n$
 are vectors such that 
 $\calB:=\{(\beta_1,\gamma_1),\dotsc,(\beta_\ell,\gamma_\ell)\}\subset\Z^\ell\oplus\Z^n$
 are linearly independent.
 Then the number of solutions to
 \begin{equation}\label{Eq:Gale_Sy}
   y^{\beta_j}\cdot h(y)^{\gamma_{j}}\ =\ 1\,,
    \qquad\mbox{for}\quad j\ =\ 1,\dotsc,\ell\,,
 \end{equation}
 in the positive region $\Delta$ is less than
\[
   \frac{e^2{+}3}{4}\, 2^{\binom{\ell}{2}}\, n^\ell \cdot d^\ell\,.
\]
 If the integer span of $\calB$ has odd index in its saturation,
 then the number of solutions in $\calM(\R)$ is less
 than 
\[
   \frac{e^4{+}3}{4}\, 2^{\binom{\ell}{2}}\, n^\ell \cdot d^\ell\,.
\]
\end{theorem}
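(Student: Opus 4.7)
The plan is to adapt, step by step, the Khovanskii--Rolle induction that Bihan and Sottile used in~\cite{BS_Khovanskii} to prove the ordinary fewnomial bound $\frac{e^2{+}3}{4}\,2^{\binom{k}{2}}\,n^k$, tracking how each degree-$d$ hypersurface $\{h_i=0\}$ contributes a factor of $d$ wherever a hyperplane would contribute $1$. After a generic perturbation of the coefficients of the $h_i$, which cannot decrease the count of nondegenerate positive solutions, I may assume that the zero loci of the $h_i$ are smooth and meet in general position and that all solutions to~\eqref{Eq:Gale_Sy} in $\Delta$ are nondegenerate. On $\Delta$ I pass to logarithmic form, rewriting each Gale equation as the real-analytic equation
\[
  F_j(y)\;:=\;\sum_{m=1}^\ell \beta_{j,m}\log y_m\;+\;\sum_{i=1}^n \gamma_{j,i}\log h_i(y)\;=\;0,\qquad j=1,\dotsc,\ell.
\]

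The heart of the proof is a recursion on $\ell$. Let $N(d,\ell,n)$ denote the maximum number of solutions in $\Delta$ of any $(d,\ell)$-dense Gale system, and let $U(d,\ell,n)$ bound the number of noncompact connected components of the intermediate smooth curve $C:=\{F_1=\dotsb=F_{\ell-1}=0\}$ in $\Delta$. Applying Khovanskii's generalized Rolle theorem to $F_\ell$ along $C$ yields
\[
  N(d,\ell,n)\;\le\;\#\bigl\{y\in\Delta\mid F_1=\dotsb=F_{\ell-1}=0,\ J(y)=0\bigr\}\;+\;U(d,\ell,n),
\]
where $J$ is the Jacobian determinant locating the critical points of $F_\ell|_C$. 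The essential computation is
\[
  \frac{\partial F_j}{\partial y_m}\;=\;\frac{\beta_{j,m}}{y_m}\;+\;\sum_{i=1}^n \gamma_{j,i}\,\frac{\partial_m h_i(y)}{h_i(y)};
\]
multiplying $J$ through by $y_1\dotsb y_\ell\prod_i h_i(y)$, which vanishes nowhere on $\Delta$, turns $J=0$ into a polynomial relation which, combined with $F_1=\dotsb=F_{\ell-1}=0$ and a suitable variable elimination, is again a $(d,\ell{-}1)$-dense Gale system carrying a multiplicative overhead of $nd$. This delivers the recurrence $N(d,\ell,n)\le nd\cdot N(d,\ell-1,n)+U(d,\ell,n)$, and a parallel Khovanskii--Rolle argument on unbounded branches bounds $U$ by essentially the same quantity times an additive Rolle constant. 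Unrolling from the base case $N(d,0,n)=1$ assembles the factors $2^{\binom{\ell}{2}}(nd)^\ell$ and the constant $\frac{e^2+3}{4}$ exactly as in~\cite{BS_Khovanskii}. The main obstacle is the Jacobian bookkeeping: one must verify that the new system is genuinely $(d,\ell{-}1)$-dense rather than merely a generic polynomial system of degree $nd$, which requires exhibiting a compatible choice of exponent basis $\calB$ at each inductive step so that no factor beyond $nd$ enters.

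For the second statement, the odd-index hypothesis on $\calB$ ensures via Theorem~\ref{Th:Str_Gale} that $y^{\beta_j}h(y)^{\gamma_j}=1$ defines the same real subvariety as the saturated system. Partitioning $\calM(\R)$ by the signs of $y_1,\dotsc,y_\ell$ and $h_1(y),\dotsc,h_n(y)$, each connected component is, via a monomial sign change, equivalent to the positive region of an associated $(d,\ell)$-dense Gale system. Summing the first bound over these components and replacing the Bihan--Sottile Rolle constant by the refinement from~\cite{BBS_all} gives the second bound, with $\frac{e^2+3}{4}$ replaced by $\frac{e^4+3}{4}$ while $2^{\binom{\ell}{2}}n^\ell d^\ell$ is unchanged.
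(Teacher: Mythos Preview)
Your proposal has a genuine structural gap at the inductive step. You assert that the polynomial Jacobian $J=0$, together with $F_1=\dotsb=F_{\ell-1}=0$ and ``a suitable variable elimination,'' yields a $(d,\ell{-}1)$-dense Gale system with multiplicative overhead $nd$. This is the obstacle you yourself flag, and it does not go through: after clearing denominators, $J$ is a degree-$nd$ polynomial in the \emph{same} $\ell$ variables, and there is no mechanism to eliminate a variable or to recast the mixed system $(F_1,\dotsc,F_{\ell-1},J)$ as $\ell{-}1$ logarithmic equations in $\ell{-}1$ variables built from coordinates and degree-$d$ forms. Consequently your recurrence $N(d,\ell,n)\le nd\cdot N(d,\ell{-}1,n)+U$ is not justified, and even if it were, unrolling it from $N(d,0,n)=1$ produces only $(nd)^\ell$, not the required $2^{\binom{\ell}{2}}(nd)^\ell$; the factor $2^{\binom{\ell}{2}}$ has no source in your scheme.

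The paper's argument is \emph{not} a recursion in $\ell$. Instead it iterates the Khovanskii--Rolle replacement in place: one successively replaces $F_\ell,F_{\ell-1},\dotsc,F_1$ by polynomials $G_\ell,G_{\ell-1},\dotsc,G_1$, where each $G_j$ is (a perturbation of) the Jacobian of $(F_1,\dotsc,F_j,G_{j+1},\dotsc,G_\ell)$ times $\prod_m y_m\prod_i h_i$. The crucial degree computation (Lemma~\ref{L:Newton}) shows $\deg G_j=2^{\ell-j}nd$, the doubling coming from differentiating the previously introduced $G_{j+1}$. At the end one has a purely polynomial system whose B\'ezout number is $\prod_j 2^{\ell-j}nd=2^{\binom{\ell}{2}}n^\ell d^\ell$, and the $\text{ubc}(C_j)$ terms are bounded by counting endpoints of $C_j$ on the stratified boundary of $\calM$ in $\P^\ell$; this is where the binomial $\binom{1+\ell+n}{j}$ and the remaining $d$-factors enter. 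Your sketch misses both the doubling of degrees and the boundary-stratum count.

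For the second bound, summing the positive estimate over sign chambers of $\calM(\R)$ would introduce a factor exponential in $\ell+n$, far worse than the stated $\tfrac{e^4+3}{4}$. The paper instead works with the squared system $y^{2\beta_k}h(y)^{2\gamma_k}=1$ globally on $\calM(\R)$; the only change is that each boundary point of $\overline{C_j}$ now touches $2^j$ local chambers of $\calM(\R)$ rather than one, so $\text{ubc}_{\calM}(C_j)\le 2^j\,\text{ubc}_\Delta(C_j)$, and the extra $2^j$ in the sum converts the partial sum for $e^2-1$ into that for $e^4-1$.
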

%%%%%%%%%%%%%%%%%%%%%%%%%%%%%%%%%%%%%%%%%%%%%%%%%%%%%%%%%%%%%%%%%%%%%%

We will deduce these bounds from several lemmata which we now formulate.
Their proofs are given in subsequent subsections.

For a vector $\alpha$, let $\alpha^\pm$ be the coordinatewise maximum of $\pm\alpha$ and $0$ so
that $\alpha^\pm$ is nonnegative, and $\alpha=\alpha^+-\alpha^-$.
Hence $(1,-2)^+=(1,0)$ and $(1,-2)^-=(0,2)$.
Set 
 \begin{equation}\label{Eq:form_g}
   \dcol{g_k(y)}\ :=\  y^{2\beta_k^+}h(y)^{2\gamma_k^+}-y^{2\beta_k^-}h(y)^{2\gamma_k^-}\,.
 \end{equation}
 Then $g_k(y)=0$ for $y\in\calM(\C)$ if and only if
 \begin{equation}\label{Eq:form_prod}
   \prod_{i=1}^\ell (y_i^2)^{\beta_{k,i}}\cdot
   \prod_{i=1}^n (h_i(y)^2)^{\gamma_{k,i}}\ =\ 1
 \end{equation}
 Notice that the system 
\[
   g_1(y)\ =\ g_2(y)\ =\ \dotsb\ =\ g_\ell(y)\ =\ 0\,,
\]
 is equivalent in $\Delta$ to the system~\eqref{Eq:Gale_Sy} and in $\calM(\R)$, it
 contains the system~\eqref{Eq:Gale_Sy} as a subsystem.
 We will bound the number of solutions to this expanded system in $\Delta$ and in $\calM(\R)$ to obtain
 our bounds for the system~\eqref{Eq:Gale_Sy} in Theorem~\ref{TH:main}. 

We state two important reductions.
%%%%%%%%%%%%%%%%%%%%%%%%%%%%%%%%%%%%%%%%%%%%%%%%%%%%%%%%%%%%%%%%%%%%%%
\begin{reduction}\label{R:reduction}
 It suffices to prove Theorem~$\ref{TH:main}$ under the following additional assumptions.
\begin{enumerate}
 \item
    For each $j=1,\dotsc,\ell$, the set
  $\dcol{\mu_j}\subset\calM(\C)$ defined by the equations
 \[
    \dcol{y^{2\beta_k}\cdot h(y)^{2\gamma_k}}\ :=\ 
    \prod_{i=1}^\ell (y_i^2)^{\beta_{k,i}}\cdot
    \prod_{i=1}^n (h_i(y)^2)^{\gamma_{k,i}}\ =\ 1\qquad
    k=1,\dotsc,j\,,
 \]
  is smooth and has codimension $j$.
  This condition holds for all sufficiently generic polynomials $h_i(y)$ of degree $d$.
 
 \item 
 For each $k=1,\dotsc,\ell$ define 
 $b_k:=  \beta_{k,1}+\dotsb+\beta_{k,\ell} + d(\gamma_{k,1}+\dotsb+\gamma_{k,n})$.
 Then every minor of the $\ell\times(1+\ell+n)$ matrix whose $k$th row is
 $(-b_k,\beta_k,\gamma_k)$ is nonzero.
\end{enumerate}
\end{reduction}
%%%%%%%%%%%%%%%%%%%%%%%%%%%%%%%%%%%%%%%%%%%%%%%%%%%%%%%%%%%%%%%%%%%%%%

We establish these reductions in Subsection~\ref{S:reduction}.

Our bounds are based on an induction which comes from 
the Khovanskii-Rolle Theorem, or more precisely, the induction is based on a modified form
which was used in~\cite{BS_mixed},
and which ensures that the hypotheses in subsequent lemmata hold.
See~\cite{BS_Khovanskii} and~\cite{BS_mixed} for more discussion.
For $D$ equal to either $\calM(\R)$ or its positive chamber $\Delta$ and $C$ an algebraic curve in
$D$, let \dcol{$\ubc_D(C)$} be the number of noncompact components of $C$.
Write  \dcol{$V_D(f_1,\dotsc,f_\ell)$} for the common zeroes in $D$ of functions 
$f_1,\dotsc,f_\ell$.

%%%%%%%%%%%%%%%%%%%%%%%%%%%%%%%%%%%%%%%%%%%%%%%%%%%%%%%%%%%%%%%%%%%%%
\begin{lemma}[Modified Khovanskii-Rolle Theorem]\label{L:KhRo}
 There exist polynomials $G_1,G_2,\dotsc,G_\ell$ where $G_{\ell-j}(y)$ is a generic
 polynomial with degree $2^jn\cdot d$ such that the following hold.
\begin{enumerate}
\item The system
 \[
      g_1(y)\ =\ \dotsb\ =\ g_j(y)\ =\
      G_{j+1}(y)\ =\ \dotsb\ =\ G_\ell(y)\ =\ 0\,,
 \]
  has only nondegenerate solutions in $\calM(\C)$ and the system
  \begin{equation}\label{Eq:C_j}
      g_1(y)\ =\ \dotsb\ =\ g_{j-1}(y)\ =\
      G_{j+1}(y)\ =\ \dotsb\ =\ G_\ell(y)\ =\ 0\,,
  \end{equation}
  ($g_j$ is omitted) defines a smooth curve in $\calM(\C)$.  
\item  Let $C_j\subset\calM(\R)$ be the solutions to~\eqref{Eq:C_j} in $\calM(\R)$, which is a smooth
  real algebraic curve.
  For $D$ be equal to either of $\calM(\R)$ or $\Delta$, we have
 \[
    |V_D(g_1,\dotsc,g_j,G_{j+1},\dotsc,G_\ell)|\ 
    \leq\ |V_D(g_1,\dotsc,g_{j-1},G_j,\dotsc,G_\ell)|+ \ubc_D(C_j)\,.
 \]
\end{enumerate}
\end{lemma}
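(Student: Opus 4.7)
My plan is to construct the polynomials $G_\ell,\ldots,G_1$ by downward induction, choosing $G_\ell$ first to be a generic polynomial of degree $nd$ and defining each subsequent $G_j$ as the Khovanskii--Rolle ``derivative'' of $g_j$ along the curve $C_j$. Concretely, for $j<\ell$ I would set $G_j$ equal to the polynomial obtained by clearing the $y_k$ and $h_m$ denominators from the logarithmic Jacobian
\[
 \det\bigl(\nabla\log\varphi_j,\,\nabla\log\varphi_1,\,\ldots,\,\nabla\log\varphi_{j-1},\,\nabla G_{j+1},\,\ldots,\,\nabla G_\ell\bigr),
\]
where $\varphi_i(y):=y^{2\beta_i}h(y)^{2\gamma_i}$.

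The key identity justifying this definition is that on the locus $\varphi_i=1$ one has $\nabla g_i = y^{2\beta_i^-}h(y)^{2\gamma_i^-}\,\nabla\log\varphi_i$, so the ordinary critical-point Jacobian of $\log\varphi_j$ along $C_j$ agrees with the displayed determinant up to a scalar nonvanishing on $\calM(\R)$. Consequently, the zeros of $G_j$ on $C_j$ coincide with the critical points of $\log\varphi_j$ restricted to $C_j$. Smoothness of each $C_j$ and nondegeneracy of the intermediate systems would follow from a standard Bertini--Sard transversality argument, using the generic choice of $G_\ell$ together with the genericity of $h_1,\ldots,h_n$ supplied by Reduction~\ref{R:reduction}(1).

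The inequality in Statement (2) is then classical Rolle applied along the smooth real $1$-manifold $C_j\cap D$: between any two consecutive zeros of $g_j$ on a compact arc of $C_j\cap D$ the derivative of $g_j$ along $C_j$ must vanish, and by the identity above this coincides with a zero of $G_j$ on $C_j$. Summing over connected components---each compact component contributing at most as many zeros of $g_j$ as of $G_j$, and each noncompact component contributing at most one extra zero---yields the bound with error term $\ubc_D(C_j)$.

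The main obstacle is the precise degree claim $\deg G_j = 2^{\ell-j}\cdot nd$. A naive row-by-row estimate of the cleared Jacobian only yields the weaker bound $j\cdot nd + \sum_{r>j}\deg G_r$, which for $j<\ell-1$ exceeds the desired doubling. Recovering the exact power-of-two growth requires identifying and removing monomial factors in $y_k$ and $h_m$ that appear in the cleared determinant, leveraging the rigid structural form $d\log\varphi_i = \sum_k 2\beta_{i,k}\,dy_k/y_k + \sum_m 2\gamma_{i,m}\,dh_m/h_m$ together with the nonvanishing-minor condition of Reduction~\ref{R:reduction}(2) to guarantee that the extracted factor has the predicted degree. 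This careful bookkeeping is the essential computational content of the construction.
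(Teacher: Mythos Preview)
Your overall strategy matches the paper's, but two steps are off. First, $G_\ell$ cannot be chosen as an arbitrary generic polynomial of degree $nd$: the $j=\ell$ case of Statement~(2) requires $G_\ell$ to vanish at the critical points of $g_\ell$ along $C_\ell$, so $G_\ell$ must itself come from the cleared Jacobian of $\log\varphi_1,\ldots,\log\varphi_\ell$ (your own formula, taken at $j=\ell$ with no $G$-rows present), and only afterwards be perturbed. Second, since each cleared Jacobian $\overline{G}_j$ is a \emph{specific} polynomial rather than a generic one, your appeal to Bertini--Sard via the genericity of $G_\ell$ and the $h_i$ does not establish Statement~(1) for the next value of $j$. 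The paper handles this by perturbing each $\overline{G}_j$ to a nearby generic $G_j$ of the same degree, after checking that the perturbation preserves the sign at the finitely many (nondegenerate) points of $V_D(g_1,\ldots,g_j,G_{j+1},\ldots,G_\ell)$; this keeps the Rolle count intact while restoring the genericity needed for the next step. You need this explicit perturbation.

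On the degree: no factor extraction is needed, and Reduction~\ref{R:reduction}(2) plays no role here. The paper multiplies the Jacobian once by $\Upsilon(y)=\prod_m y_m\cdot\prod_i h_i(y)$ and shows directly that the product has degree $2^{\ell-j}nd$. After pulling each $y_m$ into its column and Laplace-expanding along the top $j$ rows, one uses the Cauchy--Binet factorization of that block as the product of the constant $j\times(\ell{+}n)$ matrix $(\beta_{k,q}\mid\gamma_{k,i})$ with the $(\ell{+}n)\times\ell$ matrix whose lower $n$ rows are $y_m\partial_m h_i/h_i$. Any $j\times j$ minor of the top block therefore involves at most $\min(n,j)$ distinct denominators $h_i$, so a single multiplication by $\prod_i h_i$ already yields a polynomial of degree $nd$, not $j\cdot nd$. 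Adding the $(2^{\ell-j}-1)nd$ contributed by the bottom block gives $2^{\ell-j}nd$ on the nose.
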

%%%%%%%%%%%%%%%%%%%%%%%%%%%%%%%%%%%%%%%%%%%%%%%%%%%%%%%%%%%%%%%%%%%%%

This implies the following estimate.
Let $D$ equal to either of $\calM(\R)$ or $\Delta$.
Then we have 
 \begin{equation}\label{Eq:Fundamental_Estimate}
   |V_D(g_1,\dotsc,g_\ell)|\ \leq\  |V_D(G_1,\dotsc,G_\ell)| + 
     \ubc_D(C_1)+\dotsb+\ubc_D(C_\ell)\,.
 \end{equation}

Our next lemma estimates these quantities.

%%%%%%%%%%%%%%%%%%%%%%%%%%%%%%%%%%%%%%%%%%%%%%%%%%%%%%%%%%%%%%%%%%%%%
\begin{lemma}\label{L:estimates}
 We have 
 \begin{enumerate}
  \item $V_\Delta(G_1,\dotsc,G_\ell)\leq V_\calM(G_1,\dotsc,G_\ell)\leq 
    2^{\binom{\ell}{2}}n^\ell\cdot d^\ell$,
  \item \rule{0pt}{20pt}${\displaystyle\ubc_\Delta(C_{j})\leq 
          \tfrac{1}{2} 2^{\binom{\ell-j}{2}} n^{\ell-j}
          \tbinom{1+\ell+n}{j}}\cdot d^{\ell}$.
  \item \rule{0pt}{20pt}${\displaystyle\ubc_\calM(C_{j})\leq 
          \tfrac{1}{2} 2^{\binom{\ell-j}{2}} 2^j n^{\ell-j}
          \tbinom{1+\ell+n}{j}}\cdot d^{\ell}$.
  \item \rule{0pt}{20pt}$ 2^{\binom{\ell-j}{2}} n^{\ell-j}\tbinom{1+\ell+n}{j}\leq 
       \frac{1}{2} \frac{2^j}{j!} \cdot 2^{\binom{\ell}{2}}n^\ell$.
 \end{enumerate}
\end{lemma}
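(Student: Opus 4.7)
The plan has four ingredients, one per statement. Part~(1) is immediate from Bezout's theorem applied to the generic polynomials $G_1,\dotsc,G_\ell$, whose degrees are $\deg G_{\ell-j}=2^{j}nd$: their common complex zero set has cardinality at most $\prod_{j=0}^{\ell-1}(2^{j}nd) = 2^{\binom{\ell}{2}}n^\ell d^\ell$. This bounds $|V_\calM(G_1,\dotsc,G_\ell)|$, and the inequality $V_\Delta\subset V_\calM$ is tautological since $\Delta\subset\calM(\R)$.

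For Parts~(2) and~(3), I would apply the compactification technique standard in the Khovanskii--Rolle school. The curve $C_j\subset\calM(\R)$ is smooth by Lemma~\ref{L:KhRo}(1), and in a suitable compactification of $\R^\ell$ its closure $\overline{C_j}$ is a compact real $1$-manifold; each noncompact component of $C_j$ in $D$ (with $D=\calM(\R)$ or $D=\Delta$) has at least two boundary limit points in $\partial D$, so
$$\ubc_D(C_j)\;\leq\;\tfrac12\,\#\bigl(\overline{C_j}\cap\partial D\bigr).$$
The boundary $\partial D$ decomposes into $1+\ell+n$ irreducible strata---the sphere at infinity, the $\ell$ coordinate hyperplanes $\{y_r=0\}$, and the $n$ hypersurfaces $\{h_i=0\}$---matching the $1+\ell+n$ columns of the minor matrix in Reduction~\ref{R:reduction}(2). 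Introducing auxiliary coordinates $z_i=h_i(y)$ recasts each $g_k$ as the binomial $y^{2\beta_k^+}z^{2\gamma_k^+}-y^{2\beta_k^-}z^{2\gamma_k^-}$ on the graph $\{z=h(y)\}$, and the nonvanishing-minor hypothesis forces exactly one of the two monomial summands to vanish on any given boundary stratum.

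Summing over the $\binom{1+\ell+n}{j}$ choices of $j$ strata consistent with the $j{-}1$ binomial equations $g_1,\dotsc,g_{j-1}$, and applying Bezout to the remaining $\ell-j$ generic equations $G_{j+1}=\dotsb=G_\ell=0$ (degree product $2^{\binom{\ell-j}{2}}n^{\ell-j}d^{\ell-j}$), together with a $d^j$ contribution arising from the $h_i$'s of degree $d$ that appear in the $g_k$'s or define the $\{h_i=0\}$ strata, yields the bound in~(2). The additional $2^j$ in~(3) records the two sides of each of the $j$ chosen hypersurfaces in $\calM(\R)$ versus only one side in $\Delta$. Part~(4) is a purely numerical inequality: using $\binom{1+\ell+n}{j}\leq(1+\ell+n)^j/j!$ and the identity $\binom{\ell}{2}-\binom{\ell-j}{2}=j\ell-\binom{j+1}{2}$, the claim reduces to an estimate of the form $(1+\ell+n)^j\leq 2^{\,j\ell-\binom{j+1}{2}+j-1}n^j$, which follows by routine term-by-term bounds.

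The main obstacle is the boundary bookkeeping in Parts~(2) and~(3): one must verify, via the $z$-substitution and the genericity in Reduction~\ref{R:reduction}, that $\overline{C_j}$ meets each boundary stratum transversally in the expected Bezout number, with no spurious contributions from higher-codimension strata or from the interaction between the sphere at infinity and the coordinate or $h_i$-divisors. Once the decomposition of $\overline{C_j}\cap\partial D$ is correctly identified, the remaining Bezout counts and the combinatorial inequality~(4) are routine.
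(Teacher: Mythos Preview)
Your proposal is correct and follows essentially the same strategy as the paper: B\'ezout for~(1); compactifying in~$\P^\ell$ with the $1{+}\ell{+}n$ boundary divisors (the hyperplane at infinity, the $\ell$ coordinate hyperplanes, and the $n$ hypersurfaces $h_i=0$), then counting boundary points of $\overline{C_j}$ via B\'ezout on each codimension-$j$ stratum for~(2)--(3); and a purely combinatorial estimate for~(4), which the paper simply cites from~\cite{BS_Khovanskii}. The paper's proof of the boundary lemma you flag as the main obstacle---that $\overline{\mu_{j-1}}$ meets $\partial\calM(\C)$ only in the codimension-$j$ skeleton, with one branch in each of the $2^j$ adjacent chambers of $\calM(\R)$---proceeds by diagonalizing the system $y^{2\beta_k}h(y)^{2\gamma_k}=1$ using the nonvanishing-minor hypothesis of Reduction~\ref{R:reduction}(2), which is exactly the mechanism your $z$-substitution is reaching for.
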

%%%%%%%%%%%%%%%%%%%%%%%%%%%%%%%%%%%%%%%%%%%%%%%%%%%%%%%%%%%%%%%%%%%%%

Statement (1) follows from Lemma~\ref{L:KhRo} by B\'ezout's Theorem, as 
$2^{\binom{\ell}{2}}n^\ell d^\ell$ is the product of the degrees of the polynomials
$G_1,\dotsc,G_n$.
Statement (4) is from the proof of Lemma 3.5 of~\cite{BS_Khovanskii}.
We prove the other statements of Lemma~\ref{L:estimates} in Subsection~\ref{S:estimates}.

Lemma~\ref{L:estimates} and the estimate~\eqref{Eq:Fundamental_Estimate} give us the
estimate
\[    |V_\Delta(g_1,\dotsc,g_\ell)|\  \leq\ 
    2^{\binom{\ell}{2}}n^\ell\cdot d^\ell\ +\   \sum_{j=1}^\ell
    \frac{1}{4} \frac{2^{j}}{j!}\cdot 2^{\binom{\ell}{2}}n^\ell\cdot d^\ell
   \ \leq\  \Bigl(1\ +\ \frac{1}{4} \sum_{j=1}^\ell \frac{2^j}{j!}\Bigr)
     \cdot 2^{\binom{\ell}{2}}n^\ell\cdot d^\ell \,.
 \]
The sum is a partial sum of the power series for $e^2-1$, 
and so we obtain
\[
  |V_\Delta(g_1,\dotsc,g_\ell)|\ <\
  \frac{e^2+3}{4}2^{\binom{\ell}{2}}n^\ell\cdot d^\ell\,.
\]
The estimation for $|V_\calM(g_1,\dotsc,g_\ell)|$ is similar. 
Using Lemma~\ref{L:estimates}(3) for $\ubc_\calM(C_j)$, the corresponding sum is now a 
partial sum for $e^4-1$, and so we obtain
\[
  |V_\calM(g_1,\dotsc,g_\ell)|\ <\
  \frac{e^4+3}{4}2^{\binom{\ell}{2}}n^\ell\cdot d^\ell\,,
\]
which completes the proof of Theorem~\ref{TH:main}.
%%%%%%%%%%%%%%%%%%%%%%%%%%%%%%%%%%%%%%%%%%%%%%%%%%%%%%%%%%%%%%%%% 

%%%%%%%%%%%%%%%%%%%%%%%%%%%%%%%%%%%%%%%%%%%%%%%%%%%%%%%%%%%%%%%%%%%%%%%%%%%%%%%
%
\subsection{Proof of reductions}\label{S:reduction}

The Reduction~\ref{R:reduction}(1) will follow from Bertini's Theorem that a general linear
section of a smooth quasi-projective variety is smooth and of the expected dimension.
First, define $\G_j\subset\T^\ell\times\T^n$ to be the subtorus defined by the equations
\[
   y^{2\beta_k}z^{2\gamma_k}\ =\ 1\qquad\mbox{for}\quad k=1,\dots,j\,.
\]
As in Section~\ref{S:gale}, let $\Psi\colon\C^\ell\to\C^\ell\times\C^n$ be the map
defined by
\[
   \Psi\ \colon\ y\ \longmapsto\ (y,\,h_1(y),\dotsc,h_n(y))\,.
\]
Then $\mu_j=\Psi^{-1}(\G_j\cap \Psi(\C^\ell))$.
Since $\Psi$ is an isomorphism onto its image and $\G_j$ has codimension $j$, it suffices to
show that $\G_j\cap\Psi(\C^\ell)$ is transverse. 

But this follows because $\Psi(\C^\ell)$ is the pullback of a linear subspace $L$ along
the map
 \begin{eqnarray*}
  \C^\ell\times\C^n &\longrightarrow& \C^{\binom{\ell+d}{d}}\times\C^n\\
  (y,z)&\longmapsto&((y^\lambda\colon |\lambda|=d),\, z)\,.
 \end{eqnarray*}
The linear space $L$ is defined by the coefficients of the polynomials
in a system Gale dual to the system~\eqref{Eq:Gale_Sy}.
Choosing $L$ to be generic, we may apply Bertini's Theorem and deduce that $\mu_j$ is smooth and
of codimension $j$.
We also see that this may be accomplished by choosing the polynomials $h_i(y)$
to be sufficiently generic.

For the second reduction, observe that our equations~\eqref{Eq:form_g} and~\eqref{Eq:form_prod}
are equivalent to
 \begin{equation}\label{eq:f_k}
   f_k(y)\ :=\ 
   \sum_{m=1}^\ell \beta_{k,m}\log|y_m|\ +\ 
   \sum_{i=1}^n \gamma_{k,i}\log|h_i(y)|\ =\ 0\,,\qquad
   k=1,\dotsc,\ell\,,
 \end{equation}
in $\calM(\R)$.
We may perturb them by changing the coefficients $\beta_{k,m}$ and $\gamma_{k,i}$ without
increasing their numbers of nondegenerate solutions.
Thus we can satisfy Reduction~\ref{R:reduction}(2) with real exponents.
Since the rational numbers are dense in the real numbers, we may satisfy
Reduction~\ref{R:reduction}(2) with rational exponents.
Finally, by clearing denominators, we may assume the exponents are integral.

%%%%%%%%%%%%%%%%%%%%%%%%%%%%%%%%%%%%%%%%%%%%%%%%%%%%%%%%%%%%%%%%%%%%%%%%%%%%%%%
%
\subsection{Proof of Lemma \ref{L:KhRo}}\label{S:LhRo}

We will establish Lemma~\ref{L:KhRo} by downward induction on $j$.
The main step is provided by the Khovanskii-Rolle Theorem, which we present in the
simplified form in which we need it.

%%%%%%%%%%%%%%%%%%%%%%%%%%%%%%%%%%%%%%%%%%%%%%%%%%%%%%%%%%%%%%%%%%%%%%%%%%%5
\begin{theorem}[Khovanskii-Rolle]
 Let $f_1,\dots,f_\ell$ be smooth functions defined on a domain $D\subset\R^\ell$
 where 
\[
  f_1(y)\ =\ f_2(y)\ =\ \dotsb\ =\ f_{\ell-1}(y)\ =\ 0\,,
\]
 defines a smooth curve $C$ in $D$.
 Let 
 \[
    J\ :=\ J(f_1,\dots,f_{\ell})\ :=\ 
   \det\left( \frac{\partial f_i}{\partial y_j} \right)_{i,j=1,\dots,\ell}
 \]
 be the Jacobian determinants of $f_1,\dotsc,f_\ell$.
 If $V_D(f_1,\dotsc,f_{\ell-1},J)$ is finite and if $C$ has finitely many components in $D$,
 then  $V_D(f_1,\dotsc,f_\ell)$ is finite and we have
 \begin{equation}\label{Eq:KhRoStep}
   |V_D(f_1,\dots,f_{\ell})|\ \leq\ |V_D(f_1,\dots,f_{\ell-1},J)|+ \ubc_D(C)\,.
 \end{equation}
\end{theorem}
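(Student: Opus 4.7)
The plan is to interpret both sides of \eqref{Eq:KhRoStep} as counts on the smooth curve $C$ and then apply the classical Rolle theorem component by component. First, since $C$ is cut out smoothly by $f_1 = \dots = f_{\ell-1} = 0$, the set $V_D(f_1,\dots,f_\ell)$ is exactly the zero set of the restriction $f_\ell|_C$. Next, I would identify $V_D(f_1,\dots,f_{\ell-1}, J)$ with the critical point set of $f_\ell|_C$: smoothness of $C$ implies that $df_1(p),\dots,df_{\ell-1}(p)$ are linearly independent at every $p \in C$ and that $T_p C$ is their common kernel, and $p$ is critical for $f_\ell|_C$ precisely when $df_\ell(p)$ lies in the span of $df_1(p),\dots,df_{\ell-1}(p)$. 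This linear dependence among the $\ell$ gradient vectors is exactly the vanishing of the Jacobian determinant $J(p)$.

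Each connected component of the smooth $1$-manifold $C$ is diffeomorphic either to $S^1$, in which case it is compact, or to $\R$, in which case it is non-compact and contributes $1$ to $\ubc_D(C)$. On a compact circular component, $m$ cyclically ordered zeros of $f_\ell|_C$ produce $m$ arcs between consecutive zeros, and by Rolle each such arc contains at least one critical point. On a non-compact component parametrized by $\R$, $m$ linearly ordered zeros produce only $m-1$ intervals, hence at least $m-1$ critical points. The zeros of $f_\ell|_C$ are necessarily isolated on every component, for otherwise $f_\ell|_C$ would vanish identically on a component, forcing $J$ to vanish identically there and contradicting the hypothesis that $V_D(f_1,\dots,f_{\ell-1}, J)$ is finite; this same observation, together with finiteness of the number of components, shows that $V_D(f_1,\dots,f_\ell)$ is itself finite.

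Summing these per-component inequalities, the total number of zeros of $f_\ell|_C$ is bounded by the total number of its critical points plus one for each non-compact component, which gives exactly
\[
   |V_D(f_1,\dots,f_\ell)|\ \leq\ |V_D(f_1,\dots,f_{\ell-1}, J)| + \ubc_D(C).
\]
The main technical point in the argument is the Jacobian-critical-point identification, which is most cleanly established by applying the implicit function theorem in a neighborhood of $p \in C$ to parametrize $C$ locally by a single coordinate and computing $(f_\ell|_C)'$ explicitly in terms of $J$; up to a nonvanishing factor coming from the implicit parametrization, $(f_\ell|_C)'(p) = 0$ reduces to $J(p) = 0$. The remainder is a standard topological application of the mean value theorem, with no further obstacles.
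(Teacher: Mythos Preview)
Your argument is essentially the paper's: both identify $J|_C$ with a nonvanishing multiple of the derivative of $f_\ell$ along $C$ and then apply Rolle arc by arc, with compact components contributing as many arcs as zeros of $f_\ell$ and noncompact ones contributing one fewer. One small slip in your finiteness step: for merely smooth (not analytic) $f_\ell$, a non-isolated zero of $f_\ell|_C$ does not force $f_\ell|_C$ to vanish identically on a component; the correct argument is that any accumulation (or infinite set) of zeros of $f_\ell|_C$ on a component produces, via Rolle between nearby zeros, infinitely many zeros of $J$ on $C$, contradicting the finiteness of $V_D(f_1,\dots,f_{\ell-1},J)$.
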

%%%%%%%%%%%%%%%%%%%%%%%%%%%%%%%%%%%%%%%%%%%%%%%%%%%%%%%%%%%%%%%%%%%%%%%%%%%5

 To see this, note that on $C$, the Jacobian $J$ is proportional to the evaluation of the
 differential of $f_\ell$ on a tangent vector to $C$.
 Given two consecutive solutions $a,b$ to $f_\ell=0$ along an arc of $C$, 
 The Jacobian will have different signs at $a$ and at $b$, and therefore will vanish at least
 once between $a$ and $b$.
\[
  \begin{picture}(282,130)(-40,0)
   \put(0,0){\includegraphics[height=120pt]{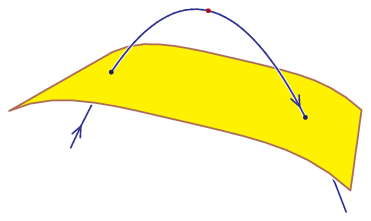}}
    \put(-40,20){$C=V_D(f_1,\dotsc,f_{\ell-1})$}
    \put(-20,65){$f_\ell=0$}
    \put(148,120){$V_D(f_1,\dotsc,f_{\ell-1},J)$}
    \put(146,122){\vector(-3,-1){30}}

    \put(19.2,110.3){$a$}   \put(26.2,109.3){\vector(1,-1){30}}
    \put(200,85.5){$b$}   \put(198.2,83.7){\vector(-1,-1){30}}
  \end{picture}
\] 
The estimate~\eqref{Eq:KhRoStep} follows as 
compact components of $C$ contain as many arcs connecting zeroes of $f_\ell$ as zeroes of
$f_\ell$, while noncompact components contain one arc fewer.

To deduce Lemma~\ref{L:KhRo}, we will iterate the Khovanskii-Rolle Theorem, showing
that the appropriate Jacobians have the claimed degrees and ensuring that its 
hypotheses are satisfied.

Observe that  Lemma~\ref{L:KhRo}(1) with $j=\ell$ holds by the assumptions we make in
Reduction~\ref{R:reduction}.
We prove Lemma~\ref{L:KhRo} by downward induction on $j=\ell,\dotsc,1$.
Specifically, we will assume that Statement (1) holds of some $j$ and then 
construct a polynomial $G_j$ such that (2) holds for $j$ and (1) holds for $j{-}1$.

To construct the polynomials $G_j$, we replace the rational functions 
$g_k(y)$ for $k=1,\dotsc,\ell$ in Lemma~\ref{L:KhRo}(2) by the logarithmic functions  
$f_k(y)$ for $k=1,\dotsc,\ell$ defined in~\eqref{eq:f_k}.
We may do this, as 
if $y\in\calM(\R)$, then $f_k(y)=0$ if and only if $g_k(y)=0$.

First, we need to determine the degrees of the Jacobians.

%%%%%%%%%%%%%%%%%%%%%%%%%%%%%%%%%%%%%%%%%%%%%%%%%%%%%%%%%%%%%%%%%%%%%%%%%%%%%%%%
\begin{lemma}\label{L:Newton}
 Let $1\leq j\leq \ell$ and suppose that $G_{j{+}1},\dotsc,G_\ell$ are polynomials, where $G_i$ has
 degree $2^{\ell-i}n\cdot d$, but is otherwise general, for each $i=1,\dotsc,\ell$.
 Then
 \begin{equation}\label{Eq:expression}
   \prod_{m=1}^\ell y_m\cdot\prod_{i=1}^n h_i(y)\cdot
   J(f_1,\dotsc,f_j,\, G_{j+1},\dotsc,G_\ell)
 \end{equation}
 is a polynomial with degree $2^{\ell-j}n\cdot d$.  
\end{lemma}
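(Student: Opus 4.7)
The plan is to use multilinearity of the determinant to expand $J:=J(f_1,\dotsc,f_j,G_{j+1},\dotsc,G_\ell)$ into simpler determinants whose rational entries have denominators that are exactly killed by the prefactor $\prod_m y_m\cdot \prod_i h_i$. First I would record the entries of the Jacobian. For $k\leq j$ and column $m$,
\[
\frac{\partial f_k}{\partial y_m}\;=\;\frac{\beta_{k,m}}{y_m}\;+\;\sum_{i=1}^n \gamma_{k,i}\,\frac{1}{h_i}\,\frac{\partial h_i}{\partial y_m},
\]
while for $k>j$ the entry $\partial G_k/\partial y_m$ is a polynomial of degree $2^{\ell-k}nd-1$. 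Multiplying column $m$ by $y_m$ for $m=1,\dotsc,\ell$ absorbs the factor $\prod_m y_m$ into the determinant and rewrites row $k\leq j$ as $b_k+\sum_{i=1}^n \gamma_{k,i}\,L_i$, where $b_k:=(\beta_{k,1},\dotsc,\beta_{k,\ell})$ is a constant row and $L_i:=\frac{1}{h_i}\bigl(y_m\,\partial h_i/\partial y_m\bigr)_{m=1,\dotsc,\ell}$ is a row with a single $1/h_i$ denominator; row $k>j$ becomes the polynomial row $\bigl(y_m\,\partial G_k/\partial y_m\bigr)_m$ of degree $2^{\ell-k}nd$.

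Next I would expand by multilinearity in rows $1,\dotsc,j$. Any summand in which two distinct rows select the same $L_i$ has two proportional rows and hence vanishes, so the surviving summands are indexed by a subset $S\subseteq\{1,\dotsc,j\}$ together with an injection $\sigma\colon S\to\{1,\dotsc,n\}$; such a summand contributes
\[
\Bigl(\prod_{k\in S}\gamma_{k,\sigma(k)}\Bigr)\cdot\frac{1}{\prod_{k\in S} h_{\sigma(k)}}\cdot \det\widetilde M_{S,\sigma},
\]
where $\widetilde M_{S,\sigma}$ has row $k$ equal to $(y_m\,\partial h_{\sigma(k)}/\partial y_m)_m$ if $k\in S$, equal to $b_k$ if $k\in\{1,\dotsc,j\}\setminus S$, and equal to $(y_m\,\partial G_k/\partial y_m)_m$ if $k>j$. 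Multiplying the whole expansion by $\prod_{i=1}^n h_i$ replaces the denominator $\prod_{k\in S} h_{\sigma(k)}$ by the polynomial $\prod_{i\notin\sigma(S)} h_i$ of degree $(n-|S|)d$, so each summand becomes a polynomial. Within $\det\widetilde M_{S,\sigma}$, the $L$-rows contribute degree $|S|d$, the constant rows $b_k$ contribute $0$, and the $G$-rows contribute $\sum_{k=j+1}^\ell 2^{\ell-k}nd = (2^{\ell-j}-1)nd$, so each summand has total degree
\[
(n-|S|)d\;+\;|S|d\;+\;(2^{\ell-j}-1)nd\;=\;2^{\ell-j}nd,
\]
independent of $S$ and $\sigma$, yielding the claimed degree bound for $\prod_m y_m\cdot \prod_i h_i\cdot J$.

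The main obstacle is recognizing that this product really is a polynomial despite the rational form of the Jacobian entries: the potential $1/h_i$ denominators coming from the logarithmic rows appear only in summands where the corresponding $L_i$ is used, and in each such summand exactly one $h_i$ is cancelled from the prefactor $\prod_{i=1}^n h_i$. Once this bookkeeping via multilinear expansion is set up, the degree count is a direct sum, and genericity of the polynomials $h_i$ together with the nonvanishing of the minors provided by Reduction~\ref{R:reduction}(2) ensures that the leading-degree contributions do not cancel across the sum over $(S,\sigma)$, so the degree is in fact equal to $2^{\ell-j}nd$.
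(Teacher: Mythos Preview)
Your argument is correct and follows the same outline as the paper's proof: absorb $\prod_m y_m$ by multiplying column $m$ by $y_m$, then expand the logarithmic rows to see that $\prod_i h_i$ clears all remaining denominators and that every surviving term has total degree $2^{\ell-j}nd$. The only cosmetic difference is that the paper organizes the expansion via Laplace expansion along the first $j$ rows together with the Cauchy--Binet identity applied to the factorization
\[
\bigl(y_m\partial_m f_k\bigr)_{k\le j}\;=\;\bigl(\beta_{k,q}\mid\gamma_{k,i}\bigr)\cdot
\begin{pmatrix}I_\ell\\ y_m\partial_m h_i/h_i\end{pmatrix},
\]
whereas you do the equivalent bookkeeping by a direct multilinear expansion of each of the first $j$ rows as $b_k+\sum_i\gamma_{k,i}L_i$ and the observation that repeated $L_i$'s give proportional rows. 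Your final paragraph on exact (rather than upper-bound) degree is a reasonable remark but is not argued in the paper and is not needed downstream, since $\overline{G}_j$ is immediately perturbed to a generic polynomial of degree $2^{\ell-j}nd$.
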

%%%%%%%%%%%%%%%%%%%%%%%%%%%%%%%%%%%%%%%%%%%%%%%%%%%%%%%%%%%%%%%%%%%%%%%%%%%%%%%%

We use this to deduce Lemma~\ref{L:KhRo}.
Suppose that we have polynomials $G_{j+1},\dotsc,G_\ell$ where $G_i$ is a generic 
polynomial with degree $2^{\ell-i}n\cdot d$, for each $i=j{+}1,\dotsc,\ell$,
and Lemma~\ref{L:KhRo}(1) holds for $j$.

Let $C_j\subset\calM(\R)$ be the smooth real algebraic curve defined by 
\[
   f_1(y)\ = \ \dotsb\ =\ f_{j-1}(y)\ \;=\;\ 
   G_{j+1}(y)\ =\ \dotsb\ =\ G_\ell(y)\ =\ 0\,.
\]
Let $\overline{G}_j$ be the product of the Jacobian 
$J(f_1,\dotsc,f_j,\, G_{j{+}1},\dotsc,G_\ell)$ with the polynomial 
\[
   \Upsilon(y)\ :=\ \prod_{m=1}^\ell y_m\cdot\prod_{i=1}^n h_i(y)\,.
\]
Then $\overline{G}_j$ is a polynomial with degree $2^{\ell-j}n\cdot d$, by
Lemma~\ref{L:Newton}. 
Since $\Upsilon$ does not vanish in $\calM(\R)$, the polynomial $\overline{G}_\ell$
and the Jacobian define the same sets, and we have
 \begin{multline}\label{Eq:KhRo_est}
   \qquad|V_D(f_1,\dotsc,f_j,\,G_{j+1},\dotsc,G_\ell)|\ \leq\ \\
   |V_D(f_1,\dotsc,f_{j-1},\,\overline{G}_j,\,G_{j+1},\dotsc,G_\ell)|\ +\ 
   \ubc_D(C_j)\,,\qquad
 \end{multline}
by the Khovanskii-Rolle Theorem.

Note however that we do not know if $\overline{G}_j$ is a generic polynomial 
with degree $2^{\ell-j}n\cdot d$, and in particular, we do not know if the
hypotheses of Lemma~\ref{L:KhRo}(1) for $j{-}1$ hold.
These hypotheses may be achieved by perturbing $\overline{G}_j$ to a nearby generic
polynomial $G_j$ with degree $2^{\ell-j}n\cdot d$.
To ensure that this perturbation does not destroy the estimate~\eqref{Eq:KhRo_est},
we only need to guarantee that the signs of $G_j$ and $\overline{G}_j$ are the same at
every point of $V_D(f_1,\dotsc,f_j,\,G_{j+1},\dotsc,G_\ell)$, but this will hold for all
sufficiently small perturbations, as there are only finitely many such points
and $\overline{G}_j$ is nonzero at each.
These conditions are equivalent to every point of
$V_D(f_1,\dotsc,f_j,\,G_{j+1},\dotsc,G_\ell)$ being nondegenerate, which
is ensured by the genericity of $G_{j{+}1},\dotsc,G_\ell$.
This completes the proof of Lemma~\ref{L:KhRo}.

%%%%%%%%%%%%%%%%%%%%%%%%%%%%%%%%%%%%%%%%%%%%%%%%%%%%%%%%%%%%%%%%%%%%%%%%%%%%%
\begin{proof}[Proof of Lemma~$\ref{L:Newton}$]
 Write $\partial_m$ for $\partial/\partial y_m$,
 and consider the expression~\eqref{Eq:expression}, writing the Jacobian in block form,
 with $j$ rows for $f_1,\dotsc,f_j$ and $\ell-j$ rows for $G_{j+1},\dotsc,G_\ell$.
 \begin{multline}\label{Eq:block}
   \qquad \prod_{m=1}^\ell y_m\cdot\prod_{i=1}^n h_i(y)\cdot
   \det\left(\, \begin{matrix}
    \bigl(\partial_m f_k(y)\bigr)_{k=1,\dotsc,j}^{m=1,\dotsc,\ell}
     \smallskip\\\hline\rule{0pt}{15pt}
    \bigl(\partial_m G_k(y)\bigr)_{k=j{+}1,\dotsc,\ell}^{m=1,\dotsc,\ell}
   \end{matrix}\,\right)\\
  \ =\ 
   \prod_{i=1}^n h_i(y)\cdot
   \det\left(\, \begin{matrix}
    \bigl(y_m\partial_m f_k(y)\bigr)_{k=1,\dotsc,j}^{m=1,\dotsc,\ell}
     \smallskip\\\hline\rule{0pt}{15pt}
    \bigl(y_m\partial_m G_k(y)\bigr)_{k=j{+}1,\dotsc,\ell}^{m=1,\dotsc,\ell}
   \end{matrix}\,\right)\,.\qquad 
 \end{multline}
 Laplace expansion along the first $j$ rows of the matrix on the right expresses its
 determinant as a sum of products of maximal minors of the two blocks.
 We will prove Lemma~\ref{L:Newton} by showing that each term in that sum is a polynomial with
 degree $2^{\ell-j}n\cdot d$.

 First, the lower block 
 $\bigl(y_m\partial_m G_k(y)\bigr)_{k=j{+}1,\dotsc,\ell}^{m=1,\dotsc,\ell}$
 is a matrix of polynomials whose entries in row $k$ are the toric derivatives 
 $y_m\partial_m G_k(y)$ of $G_k$.
 Thus every entry in row $k$ has degree $\deg(G_k)=2^{\ell-k}n\cdot d$, and
 therefore each minor has degree 
 \begin{equation}\label{Eq:lower_degree}
  2^{\ell-(j+1)}n\cdot d \ +\ \dotsb\ +\ 
  2n\cdot d \ +\ n\cdot d\ =\ 
  (2^{\ell-j}-1)n\cdot d\,.
 \end{equation}

 For the upper block, note that $y_m\partial_m  f_k(y)$ is
\[
  \beta_{k,m}\ +\ \sum_{i=1}^k \gamma_{k,i}y_m\partial_m \log|h_i(y)|\ =\ 
  \beta_{k,m}\ +\ \sum_{i=1}^n\gamma_{k,i} \frac{y_m \partial_m h_i(y)}{h_i(y)}\,. 
\]
 In particular, the upper block is a product of a $j\times(\ell+n)$ matrix and a
 $(\ell+n)\times \ell$ matrix,
\[
    \bigl(y_m\partial_m f_k(y)\bigr)_{k=1,\dotsc,j}^{m=1,\dotsc,\ell}\ =\ 
    \bigl(\, \beta_{k,q}\ \mid\ \gamma_{k,i}\,\bigr)\cdot
    \left(\begin{matrix} I_\ell\medskip\\ 
     y_m \partial_m h_i(y)/h_i(y)\end{matrix}\right)\ ,
\]
 where $I_\ell=(\delta_{q,m})$ is the $\ell\times\ell$ identity matrix.
 By the Cauchy-Binet formula, a $j\times j$ minor of $\bigl(y_m\partial_m f_k(y)\bigr)$ is a
 sum of products of $j\times j$ minors of the two matrices on the right.
 Consider now the product of $\prod_{i=1}^nh_i(y)$ with a term in this sum.

 The first matrix $\bigl(\beta_{k,q}\, \mid\, \gamma_{k,i}\bigr)$ contains constants, and
 a $j\times j$ minor of the second involves no more than $p:=\min\{n,j\}$ rows from
 its lower $n\times\ell$ block $(y_m\partial_mh_i(y)/h_i(y))$.
 This minor is a sum of $j!$ terms, each one a product of a constant and $p$ entries of the 
 matrix of the form $y_m\partial_mh_i(y)/h_i(y)$, for different rows $i$.
 Multiplying this term by $\prod_{i=1}^nh_i(y)$ will clear all 
 denominators and result in a product of $p$ terms of the form
 $y_m\partial_mh_i(y)$ and complementary $n{-}p$ terms of the form $h_i(y)$.
 Each of these terms has degree $d$, so each term coming from the
 expansion of this $j\times j$ minor has degree $n\cdot d$, and therefore
 the product of $\prod_{i=1}^nh_i(y)$ by each $j\times j$ minor of the upper block
 of~\eqref{Eq:block} will have degree $n\cdot d$.

 Together with~\eqref{Eq:lower_degree}, this completes the proof.
\end{proof}
%%%%%%%%%%%%%%%%%%%%%%%%%%%%%%%%%%%%%%%%%%%%%%%%%%%%%%%%%%%%%%%%%%%%%%%%%%%%%

%%%%%%%%%%%%%%%%%%%%%%%%%%%%%%%%%%%%%%%%%%%%%%%%%%%%%%%%%%%%%%%%%%%%%%%%%%%%%
%
\subsection{Proof of Lemma~\ref{L:estimates}}\label{S:estimates}

We only need to prove Statements (2) and (3) of Lemma~\ref{L:estimates}.
By Reduction~\ref{R:reduction} and Lemma~\ref{L:KhRo}, we may assume that the polynomials
$h_i(y)$ and $G_j(y)$ are generic given their degrees.

The complexification of the real curve $C_j$ is defined in $\C^\ell\supset\calM(\C)$ by
 \begin{equation}\label{Eq:curvej}
   g_1(y)\ =\ \dotsb\ =\ g_{j-1}(y)\ \ =\ \ 
   G_{j+1}(y)\ =\ \dotsb\ =\ G_\ell(y)\ =\ 0\,,
 \end{equation}
and it lies on the codimension $j{-}1$ subvariety $\mu_{j-1}\subset\calM(\C)$ defined by
\[
   g_1(y)\ =\ \dotsb\ =\ g_{j-1}(y)\ =\ 0\,.
\]
We bound the number of unbounded components of $C_j$ by first describing the points where
$\mu_{j-1}(\C)$ meets the boundary of $\calM(\C)$, then bound the number of real solutions to 
 \begin{equation}\label{Eq:bigG}
     G_{j+1}(y)\ =\ \dotsb\ =\ G_\ell(y)\ =\ 0
 \end{equation}
on these boundary points, and lastly by determining the number of components of $C_j$ incident
upon each such real solution.

To accomplish this, consider $\calM(\C)\subset\C^\ell$ as a subset of projective space
$\P^\ell$.
Its boundary $\partial\calM(\C):=\P^\ell\setminus\calM(\C)$
consists of the finite coordinate planes $y_m=0$ for $m=1,\dotsc,\ell$, the coordinate
plane at inifinty $y_0=0$, and the degree $d$ hypersurfaces $h_i(y)=0$ for $i=1,\dotsc,n$.
Strictly speaking, we must homogenize polynomials $h_i,g_j,G_k$ with respect to the coordinate
$y_0$ at infinity.
When working on an affine patch where $y_m\neq 0$, we de-homogenize them by setting
$y_m=1$. 

By our assumption that the polynomials $h_i$ were general, this boundary
$\partial\calM(\C)$ forms a divisor with normal crossings whose components are the coordinate
planes and the hypersurfaces $h_i(y)=0$.
The common zeroes of any $q$ of the polynomials $h_1,\dotsc,h_n$ and $j{-}q$ of the
coordinates $y_0,\dotsc,y_\ell$ is a smooth subvariety of codimension $j$, called a
\demph{codimension-$j$ stratum}.
The union of these $j$-fold intersections of the components of the boundary divisor
is called the \demph{codimension-$j$ skeleton} of $\partial\calM(\C)$.

%%%%%%%%%%%%%%%%%%%%%%%%%%%%%%%%%%%%%%%%%%%%%%%%%%%%%%%%%%%%%
\begin{lemma}
  The closure $\overline{\mu_{j-1}}$ meets $\partial\calM(\C)$ in a union of
  codimension-$j$ strata and in the neighborhood of a real point of\/
 $\overline{\mu_{j-1}}$ lying in the relative interior of a codimension-$j$ stratum,
 $\mu_{j-1}$ has one branch in each of the $2^j$ components of $\calM(\R)$
 incident on that point. 
\end{lemma}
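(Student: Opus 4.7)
The plan is to reduce both claims to a local computation near a boundary point $p$. Suppose $p$ lies in the relative interior of a codimension-$s$ stratum of $\partial\calM(\C)$. Since $\partial\calM(\C)$ is a normal-crossings divisor, I would choose local analytic coordinates $(u_1,\dotsc,u_\ell)$ centered at $p$ so that the $s$ boundary divisors through $p$ are cut out by $u_1=0,\dotsc,u_s=0$, and each function $y_0,y_1,\dotsc,y_\ell,h_1,\dotsc,h_n$ is either one of $u_1,\dotsc,u_s$ times a local unit, or itself a local unit at $p$. Collecting contributions, every Laurent monomial appearing in the defining equations of $\mu_{j-1}$ rewrites as
\[
  y^{2\beta_k}h(y)^{2\gamma_k}\;=\;u_1^{2c_{k,1}}\dotsb u_s^{2c_{k,s}}\cdot W_k(u)\,,
\]
with $W_k$ a local unit and $c_{k,m}\in\Z$. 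The crucial point is that the resulting $(j-1)\times s$ integer matrix $C=(c_{k,m})$ is a submatrix of $(-b_k,\beta_k,\gamma_k)$ from Reduction~\ref{R:reduction}(2), so every maximal minor of $C$ is nonzero.

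Using this local form, $\mu_{j-1}$ is cut out near $p$ (away from the boundary) by $u^{2c_k}W_k(u)=1$. For a sequence in $\mu_{j-1}$ to accumulate at $p$ one needs $u_1,\dotsc,u_s\to 0$, and taking logarithms shows that the asymptotic direction $a_m:=-\log|u_m|/T$ must satisfy $Ca=0$. From the nonvanishing of the maximal minors of $C$: if $s<j$ then $\ker C=\{0\}$ and no approach exists, so $\overline{\mu_{j-1}}$ avoids strata of codimension less than $j$; if $s=j$ then $\ker C$ is one-dimensional and spanned by a vector $a^*$ with every coordinate nonzero (else a $(j-1)\times(j-1)$ minor of $C$ would vanish), and accumulation happens precisely at those codimension-$j$ strata where $a^*$ is, up to sign, coordinatewise positive; if $s>j$ the approach cone has positive dimension, but since $\overline{\mu_{j-1}}\cap\partial\calM(\C)$ has dimension $\ell-j$ such deeper intersections lie in the closures of the codimension-$j$ strata already captured. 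Because the analysis does not constrain $u_{s+1},\dotsc,u_\ell$, the intersection of $\overline{\mu_{j-1}}$ with any codimension-$j$ stratum it meets is the entire stratum, proving the first assertion.

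For the second assertion, fix a real point $p$ in the interior of a codimension-$j$ stratum $T\subset\overline{\mu_{j-1}}$, normalize $a^*\in\Z_{>0}^j$, and substitute the ansatz $u_m=\epsilon_m t_m\tau^{a^*_m}$ ($m=1,\dotsc,j$), for a sign choice $\epsilon\in\{\pm 1\}^j$, small parameter $\tau>0$, and positive $t_m$. The $2^j$ components of $\calM(\R)$ incident on $p$ are precisely indexed by $\epsilon$. Because each $2c_{k,m}$ is even, the signs $\epsilon_m$ cancel in $u^{2c_k}$, and because $\sum_m c_{k,m}a^*_m=0$, the $\tau$ powers cancel as well. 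At $\tau=0$ the defining system becomes
\[
  \prod_{m=1}^j t_m^{2c_{k,m}}\cdot W_k(0,\dotsc,0,u_{j+1},\dotsc,u_\ell)\;=\;1\qquad(k=1,\dotsc,j-1)\,,
\]
a real-analytic system of $j-1$ equations in $j$ positive unknowns $t_m$, with parameters $(u_{j+1},\dotsc,u_\ell)$. Its Jacobian with respect to $(\log t_1,\dotsc,\log t_j)$ is a positive diagonal times $2C$, of rank $j-1$, so the implicit function theorem produces a unique real-analytic $(\ell-j+1)$-dimensional family of solutions parameterized by $\tau$ and $(u_{j+1},\dotsc,u_\ell)$; this is the desired single branch of $\mu_{j-1}$ in the component of $\calM(\R)$ indexed by $\epsilon$.

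The main obstacle is the implicit-function step in the last paragraph: one must verify that the ansatz genuinely produces a smooth branch of $\mu_{j-1}$ (not several merged sheets or a degenerate limit) and that distinct sign vectors $\epsilon$ give geometrically distinct branches. Both follow from the maximal-minor nonvanishing of $C$ combined with the evenness of the exponents in $y^{2\beta_k}h^{2\gamma_k}$, but carrying out the verification carefully for an arbitrary codimension-$j$ stratum with a nonsquare integer matrix $C$ is where the work actually lies.
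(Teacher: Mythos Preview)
Your strategy is sound and would lead to a correct proof, but it follows a genuinely different route from the paper's.

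The paper argues algebraically. Given a boundary point $Y$ lying on exactly $s\le j-1$ of the divisors, it uses the nonvanishing of minors from Reduction~\ref{R:reduction}(2) to perform integer row operations on the exponent matrix so that the columns corresponding to the $s$ vanishing functions become diagonal. The transformed defining equations of $\mu_{j-1}$ then read $h_i(y)^{a_i}=y^{\alpha_i}h(y)^{\delta_i}$ (or with some $y_i^{a_i}$ on the left), where the right-hand side involves none of the functions vanishing at $Y$; since the left side is zero at $Y$ and the right side is not, $Y\notin\overline{\mu_{j-1}}$. For the branch count at a codimension-$j$ point the same diagonalization leaves one extra vanishing function $y_j$ on the right, producing equations of the shape $h_i(y)^{a_i}=y_j^{c_i}\cdot(\text{nonvanishing})$ with every exponent even. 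The evenness makes the system invariant under the $2^j$ sign reflections of $h_1,\dotsc,h_q,y_{q+1},\dotsc,y_j$, and the solved form lets one read off a single branch in each chamber by taking positive real roots.

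Your approach is analytic rather than algebraic: you isolate the same $(j{-}1)\times s$ submatrix $C$ of exponents but, instead of row-reducing it, you study the limit direction $a_m=-\log|u_m|/T$ of an approaching sequence and use $\ker C=\{0\}$ for $s<j$ to rule out accumulation. For the branches you pass to the weighted ansatz $u_m=\epsilon_m t_m\tau^{a^*_m}$ and invoke the implicit function theorem. Both arguments rest on the same two ingredients---nonvanishing of the minors of $C$ and evenness of the exponents $2\beta_k,2\gamma_k$---but the paper's diagonalization sidesteps exactly the difficulty you flag as the main obstacle: once the equations are in solved form, the single branch per chamber is visible without any IFT bookkeeping (normalizing the $\tau$-scaling redundancy, arguing that every approaching arc is captured by the ansatz). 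Conversely, your asymptotic-direction argument yields the extra information of \emph{which} codimension-$j$ strata actually lie in $\overline{\mu_{j-1}}$ (those where the kernel vector $a^*$ has a definite sign), and it would transfer more readily to boundaries where an explicit integral diagonalization is not available.
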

%%%%%%%%%%%%%%%%%%%%%%%%%%%%%%%%%%%%%%%%%%%%%%%%%%%%%%%%%%%%%

%%%%%%%%%%%%%%%%%%%%%%%%%%%%%%%%%%%%%%%%%%%%%%%%%%%%%%%%%%%%%
\begin{proof}
 Since $\mu_{j-1}\subset\calM(\C)$ has codimension $j{-}1$ in $\P^\ell$, the
 intersection of its closure $\overline{\mu_{j-1}}$ with the 
 boundary divisor $\partial\calM(\C)$ will have codimension $j$ in $\P^\ell$.
 We prove the first part of the lemma by showing that this intersection consists of points
 lying within the codimension-$j$ skeleton of $\partial\calM(\C)$, and therefore is a union of
 components of the codimension-$j$ skeleton.

 Let $Y$ be a point of $\partial\calM(\C)$ that does not lie in the codimension-$j$ skeleton. 
 We show that $Y\not\in\overline{\mu_{j-1}}$.
 Since $\partial\calM(\C)$ is a divisor with normal crossings whose components are defined by
 the coordinates $y_i$ and forms $h_i$, at least one, but no more than
 $j{-}1$ of the coordinates $y_0,y_1,\dotsc,y_\ell$ and forms
 $h_1,\dotsc,h_n$ vanish at $Y$.
 Reordering the coordinates and forms if necessary, we may assume that the forms which
 vanish at $Y$ are among $h_1,\dotsc,h_q$ and the coordinates are among
 $y_{q+1},\dotsc,y_{j-1}$.
 Since the assertion about $Y$ is local, we may restrict to the affine patch $U$ where
 none of the remaining coordinates or forms vanish.
 
 The equations
 \begin{equation}\label{Eq:mujeqs}
   y^{2\beta_k} h(y)^{2\gamma_k}\ =\ 1\qquad k=1,\dotsc,j{-}1\,,
 \end{equation}
 define $\mu_{j-1}\subset\calM(\C)$.
 By Reduction~\ref{R:reduction}(2) on the homogenized exponent vectors,
 there is an integer linear combination of the first $j{-}1$
 rows of the matrix $(-b_k,\beta_k,\gamma_k)$ so that the columns corresponding to 
 $h_1,\dotsc,h_q$ and  $y_{q+1},\dotsc,y_{j-1}$ become diagonal.
 These same linear combinations transform the equations~\eqref{Eq:mujeqs} into equations
 of the form
 \begin{equation}\label{Eq:skeletondiagonal}
  \begin{array}{rcl}
   h_i(y)^{a_i} &=& y^{\alpha_i}\cdot h(y)^{\delta_i}\qquad i=1,\dotsc,q\,,\\
   y_i^{a_i} &=& y^{\alpha_i}\cdot h(y)^{\delta_i}\qquad
   i=q{+}1,\dotsc,j{-}1\,,\rule{0pt}{15pt} 
 \end{array}
 \end{equation}
 where $a_i> 0$ and the components of $\alpha_i$ in positions $q{+}1,\dotsc,j{-}1$
 vanish as do the components of $\delta_i$ in positions $1,\dotsc,q$.
 That is, $h_1,\dotsc,h_q$ and $y_{q+1},\dotsc,y_{j-1}$ do not appear on the right of these
 expressions. 
 
 Since the expressions in~\eqref{Eq:skeletondiagonal} are well-defined functions in $U$,
 the regular functions
\[
  \begin{array}{rl}
    h_i(y)^{a_i}\ -\ y^{\alpha_i}\cdot h(y)^{\delta_i}\quad &\mbox{for}\quad i=1,\dotsc,q\,,
    \quad\mbox{and}\\
   y_i^{a_i}\ -\ y^{\alpha_i}\cdot h(y)^{\delta_i}\quad&
   \mbox{for}\quad i=q{+}1,\dotsc,j{-}1\rule{0pt}{15pt}  
 \end{array} 
\]
 vanish on $\mu_{j-1}\cap U$, and hence on $\overline{\mu_{j-1}}\cap U$.
 However, these cannot all vanish at $Y$, for none of the functions 
 $y^{\alpha_i}h^{\delta_i}$ for $i=1,\dotsc,j{-}1$ vanish at $Y$, but at least one of
 $h_1^{a_1},\dotsc,h_q^{a_q}$, $y_{q+1}^{a_{q+1}},\dotsc,y_{j-1}^{a_{j-1}}$ vanishes at
 $Y$.

 To complete the proof, suppose that $Y\in\overline{\mu_{j-1}}(\R)\cap\partial\calM(\R)$ is a
 real point lying on a
 codimension-$j$ stratum of $\partial\calM(\C)$ but not on a stratum of larger codimension.
 Reordering the coordinates and functions if necessary and working locally, we may assume that 
 the polynomials $h_1(y),\dotsc,h_q(y)$ and coordinates $y_{q+1},\dotsc,y_{j}$ vanish at
 $Y$.
 Thus in the affine neighborhood $U$ of $Y$ where none of the other polynomials or coordinates
 vanish, $\mu_{j-1}$ is defined by equations of the form
 \begin{equation}\label{Eq:newdiagonal}
  \begin{array}{rcl}
   h_i(y)^{a_i} &=& y_j^{c_i}\cdot y^{\alpha_i}\cdot h(y)^{\delta_i}\qquad i=1,\dotsc,q\,,\\
   y_i^{a_i} &=&  y_j^{c_i}\cdot y^{\alpha_i}\cdot h(y)^{\delta_i}\qquad
   i=q{+}1,\dotsc,j\,,\rule{0pt}{15pt} 
 \end{array}
 \end{equation}
 where $a_i>0$ and none of $h_1,\dotsc,h_q$ and $y_{q+1},\dotsc,y_j$ appear in the expressions
 $y^{\alpha_i}\cdot h(y)^{\delta_i}$ for $i=1,\dotsc,j{-}1$.
 In fact, we must have $c_i>0$ as $Y\in\overline{\mu_{j-1}}$.
 In a neighborhood of $Y$ in $\R\P^\ell$, the complement $\calM(\R)$ has $2^j$ chambers given
 by the signs of the functions $h_1(y),\dotsc,h_q(y)$, $y_{q+1},\dotsc,y_j$.
 Since the exponents in~\eqref{Eq:newdiagonal} have every component even
 (this comes from the evenness of the  exponents in~\eqref{Eq:mujeqs}), there is a
 component of $\mu_{j-1}$ in each of these chambers.
\end{proof}
%%%%%%%%%%%%%%%%%%%%%%%%%%%%%%%%%%%%%%%%%%%%%%%%%%%%%%%%%%%%%

We complete the proof of Statements (2) and (3) of Lemma~\ref{L:estimates}.
We estimate the number of unbounded components of the curve $C_j$ by first bounding the
number of points where its closure in $\R\P^\ell$ meets the boundary divisor, and then 
bounding the number of components of $C_j$ incident upon each point.

The estimate for the number of points in $\overline{C_j}\cap\partial\calM(\R)$ is simply
the number of points in the codimension-$j$ skeleton where
 \begin{equation}\label{eq:Gs}
   G_{j+1}(y)\ =\ G_{j+2}(y)\ =\ \dotsb\ =\ G_\ell(y)\ =\ 0\,.
 \end{equation}
Consider a stratum where $q$ of the polynomials $h_i$ vanish and $j{-}q$ of the coordinates
$y_m$ vanish.
Since the polynomials $G_i$ and $h_i$ are general given their degrees, the number of
points on this stratum will be at most the product of these degrees, which is
\[   
   d^q \cdot 2^{\ell-j-1} nd\cdot 2^{\ell-j-2} nd \dotsb 2 nd \cdot nd \ =\ 
   2^{\binom{\ell-j}{2}} \cdot n^{\ell-j} d^{\ell-j} d^q\,.
\]
Since there are $\binom{\ell+1}{j-q}\cdot\binom{n}{q}$ such strata, the number of points
where $C_j$ meets the boundary is at most
 \begin{equation}\label{eq:cjbdry}
   2^{\binom{\ell-j}{2}} \cdot  n^{\ell-j}  d^{\ell-j}
    \cdot\ \sum_{q=0}^j \binom{\ell+1}{j-q}\binom{n}{q}\cdot d^q\ <\ 
   2^{\binom{\ell-j}{2}} \cdot  n^{\ell-j} \binom{1+\ell+n}{j}
    \cdot   d^\ell\ .
 \end{equation}
 
As the polynomials $G_i$ are general the variety defined by~\eqref{eq:Gs} is transverse to
the codimension-$j$ stratum, so there is at most one branch of $C_j$ in each component of
$\calM(\R)$ meeting such a point.
Thus the number~\eqref{eq:cjbdry} bounds the number of ends of components
of $C_j$ in $\Delta$, so it bounds twice the number of unbounded components of $C_j$ in
$\Delta$. 
For the bound in $\calM(\R)$, we multiply this by $2^j$, as there are $2^j$ components of
$\calM(\R)$ meeting each such point, and each component of $\calM(\R)$ contains at most one branch of
$C_j$. 

This completes the proof of the  Lemma~\ref{L:estimates}.

%%%%%%%%%%%%%%%%%%%%%%%%%%%%%%%%%%%%%%%%%%%%%%%%%%%%%%%%%%%%%%%%%%%%%%%%%%%%%
%
\section{Betti Number Bounds}\label{Sec:Betti}

Using the bound~\eqref{Eq:structured_bound} and stratified Morse theory for
a manifold with corners~\cite{GM88}, we prove the following theorem.

%%%%%%%%%%%%%%%%%%%%%%%%%%%%%%%%%%%%%%%%%%%%%%%%%%%%%%%%%%%%%%%%%%%%%%%%%%%
\begin{theorem}\label{betti}
 Let $X$ be a hypersurface in $\R^n_>$ defined by a $(d,\ell)$-dense fewnomial.
 Then
 \[
    b_*(X)\ <\ \frac{e^2{+}3}{4}\, 2^{\binom{\ell}{2}} \cdot d^\ell
    \cdot \sum_{i=0}^{n} \tbinom{n}{i}\, i^\ell\,.
 \]
\end{theorem}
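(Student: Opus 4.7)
The plan is to follow Bihan and Sottile~\cite{BS_Betti}, who proved the corresponding bound for ordinary fewnomials, replacing their use of~\eqref{Eq:BS} throughout with the $(d,\ell)$-dense bound~\eqref{Eq:structured_bound}. The strategy is threefold: (1) compactify $\R^n_>$ as a manifold with corners whose codimension-$q$ open strata are indexed by $q$-subsets $S\subset\{1,\dotsc,n\}$ of coordinates tending to $0$; (2) reduce $b_*(X)$ by stratified Morse theory to a sum of critical-point counts over the $\binom{n}{i}$ open strata of each dimension $i$; and (3) bound the count on each $i$-dimensional stratum by an instance of Theorem~\ref{Th:Structured_bound} in dimension $i$.

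After a generic perturbation of the coefficients of $f$ (which only increases $b_*(X)$), the hypersurface $X$ is smooth. Equip the compactification with a generic proper Morse function $\lambda$, so that no critical points lie at infinity, and a further small perturbation makes $X$ meet every stratum transversely. Stratified Morse theory on a manifold with corners~\cite{GM88} then gives
\[
  b_*(X)\ \leq\ \sum_{i=0}^n\ \sum_{|S|=n-i}\ \#\,\mathrm{crit}\bigl(\lambda|_{X\cap F_S}\bigr)\,,
\]
where $F_S$ denotes the open stratum of codimension $|S|$.

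On an $i$-dimensional stratum $F_S\cong\R^i_>$, the restriction of $f$ to $F_S$ (after multiplication by a Laurent monomial if necessary) has support contained in $\calA_S:=\{\alpha\in\calA:\alpha_j=0\text{ for }j\in S\}$. The decomposition $\calA=\psi(d\simplex^\ell\cap\Z^\ell)\cup\calW$ restricts to a decomposition of $\calA_S$ of the same form in $\Z^i$, with parameters $(d',\ell')$ satisfying $d'\leq d$ and $\ell'\leq\ell$: the image of the $\ell$-simplex composed with the projection remains the image of a simplex of dimension $\ell'\leq\ell$ and multiplier $d'\leq d$, while $\calW$ contributes at most $i$ affinely independent vectors after restriction. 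The critical-point equations on $F_S$ form a system of $i$ polynomials in $i$ variables whose supports all lie in $\calA_S$, so Theorem~\ref{Th:Structured_bound} in dimension $i$ bounds the count by
\[
  \frac{e^2+3}{4}\,2^{\binom{\ell'}{2}}\,i^{\ell'}(d')^{\ell'}\ \leq\ \frac{e^2+3}{4}\,2^{\binom{\ell}{2}}\,i^\ell\,d^\ell\,.
\]
Summing over the $\binom{n}{i}$ strata of each dimension $i$ yields the stated bound.

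The main obstacle is the careful verification of the $(d',\ell')$-dense structure of $\calA_S$ on each stratum---in particular, tracking how the simplex image $\psi(d\simplex^\ell\cap\Z^\ell)$ and the affinely independent vectors $\calW$ behave under restriction to coordinate hyperplanes---together with the properness argument that rules out contributions from strata at infinity. These are precisely the steps that~\cite{BS_Betti} addresses in the ordinary fewnomial case, and the extension to $(d,\ell)$-dense fewnomials amounts to checking that each required property is preserved under the slightly richer support structure.
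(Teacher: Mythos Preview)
Your outline follows~\cite{BS_Betti}, but the step where you restrict to boundary strata is where the argument breaks. You set $\calA_S:=\{\alpha\in\calA:\alpha_j=0\text{ for }j\in S\}$ and claim this is $(d',\ell')$-dense. For a generic affine map $\psi$, however, \emph{no} point of $\psi(d\simplex^\ell\cap\Z^\ell)$ lies in the coordinate subspace $\{\alpha_j=0:j\in S\}$, so $\calA_S$ typically contains at most the surviving vectors of $\calW$ and carries no $(d',\ell')$-dense structure whatsoever. Moreover, a Laurent polynomial does not even restrict to a function on the locus $x_j=0$, so compactifying by letting coordinates tend to zero does not produce strata on which $f$ or its critical-point system lives. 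Your own explanation then slides from ``restriction'' to ``projection'' (``the image of the $\ell$-simplex composed with the projection''), but these are different operations, and only projection does what is needed.

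The paper's proof avoids both problems by first passing to logarithmic coordinates $x_i=e^{z_i}$ and then truncating by a large simplex $\Delta_M$ whose facets are affine hyperplanes $z_i=-M_i$ and $\sum z_i=M_0$. Restricting the exponential sum $\sum_{\alpha} c_\alpha e^{z\cdot\alpha}$ to a face $H_S$ substitutes constants for the $z_i$ with $i\in S$; this kills no term, but replaces each exponent $\alpha$ by its \emph{projection} $\pi(\alpha)$ onto the remaining coordinates. After first reducing, by an affine change of coordinates, to the case where $\calW$ is the standard basis of $\R^n$, the projected support $\pi(\calA)=(\pi\circ\psi)(d\simplex^\ell\cap\Z^\ell)\cup\{e_i:i\notin S\}$ is $(d,\ell)$-dense in $\R^{n-|S|}$ with the \emph{same} $d$ and $\ell$; only $n$ drops. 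The Morse function is chosen to be linear, $L_u(z)=u\cdot z$, precisely so that the critical-point equations $D_v\varphi=0$ are again exponential sums with support $\calA$ (a generic Morse function would not give this). Theorem~\ref{Th:Structured_bound} then applies on each stratum with bound $\tfrac{e^2+3}{4}\,2^{\binom{\ell}{2}}(n-|S|)^\ell d^\ell$, and summing over $S\subset\{1,\dotsc,n\}$ gives the theorem.
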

%%%%%%%%%%%%%%%%%%%%%%%%%%%%%%%%%%%%%%%%%%%%%%%%%%%%%%%%%%%%%%%%%%%%%%%%%%%

Let $f(x_1,\dots,x_n)$ be a real Laurent polynomial with $(d,\ell)$-dense support
$\calA \subset \Z^n$ such that 
$X := \calV(f) \subset \R^n_>$ is a smooth hypersurface. 
By a logarithmic change of coordinates, $x_i=e^{z_i}$, we may work with exponential sums in  
$\R^n$  instead of polynomials in $\R^n_>$. 
Then the $(d,\ell)$-dense fewnomial 
$f = \sum_{\alpha \in \calA} c_{\alpha} x^{\alpha}$ becomes
the exponential sum
\[
 \varphi\ :=\ \sum_{\alpha \in \calA} c_{\alpha} e^{z \cdot \alpha}.
\]
In this way, the bounds~\eqref{Eq:BS} and~\eqref{Eq:structured_bound} for positive
solutions to fewnomial systems hold for real solutions to systems of exponential sums with the 
same exponents.  

Let $Z := \calV(\varphi) \subset \R^n$ be the hypersurface defined by $\varphi$, which is
homeomorphic to $X$. We will prove Theorem~\ref{betti} in these logarithmic coordinates
and with real exponents.

\begin{theorem}\label{bettilog}
 The sum of the Betti numbers of a hypersurface in $\R^n$ defined by an exponential sum
 whose exponent vectors are $(d,\ell)$-dense
 $\calA = \calW\, \cup\, \psi(d\simplex^\ell\cap\Z^\ell) \subset \R^n$ is at most
 \[
    \frac{e^2{+}3}{4}\, 2^{\binom{\ell}{2}} \cdot d^\ell
    \cdot \sum_{i=0}^{n} \tbinom{n}{i}\, i^\ell\,.
 \]
\end{theorem}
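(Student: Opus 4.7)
The plan is to imitate the argument of Bihan--Sottile \cite{BS_Betti} which established the classical fewnomial Betti bound \eqref{eq:Betti}, substituting our $(d,\ell)$-dense fewnomial bound \eqref{Eq:structured_bound} (Theorem~\ref{Th:Structured_bound}) wherever a counting estimate is invoked. The underlying machine is stratified Morse theory on a manifold with corners \cite{GM88}, which controls $b_*(Z)$ by the total number of critical points of a generic Morse function on an appropriate compactification of $Z$, summed across all boundary strata.

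First I would compactify $\R^n$ as a manifold with corners $M$, exactly as in the treatment of \cite{BS_Betti}, and let $\overline{Z}$ denote the closure of $Z$ in $M$. The boundary strata of $\overline{Z}$ are indexed by subsets $I\subseteq\{1,\dotsc,n\}$, with one stratum of dimension $i$ for each subset of size $i$, giving $\binom{n}{i}$ strata of dimension $i$. On the stratum indexed by $I$, the restriction of $\varphi$ becomes an exponential sum in $i$ variables whose support is obtained from $\calA$ by a suitable projection; because this projection sends $\psi(d\simplex^\ell\cap\Z^\ell)$ to a set of the same shape and sends $\calW$ to an affinely independent set, the restricted support is again $(d,\ell)$-dense, possibly with smaller effective parameters. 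Choose a generic linear Morse function $L$ on $M$; by the stratified Morse inequalities of \cite{GM88}, $b_*(Z) = b_*(\overline{Z})$ is bounded above by the total number of critical points of $L|_{\overline{Z}}$ across all strata.

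On a fixed $i$-dimensional stratum, the critical points of $L|_{\overline{Z}}$ are the common solutions of an $i\times i$ system consisting of the restricted exponential sum together with $i{-}1$ Jacobian-type equations expressing that $dL$ annihilates the tangent space of the hypersurface. These auxiliary equations preserve the $(d,\ell)$-dense structure of the support, so Theorem~\ref{Th:Structured_bound}, applied in the setting of exponential sums with real exponents (the reduction from real to integer exponents appears in Reduction~\ref{R:reduction}), bounds the number of critical points on such a stratum by
\[
  \frac{e^2+3}{4}\,2^{\binom{\ell}{2}}\,i^\ell\,d^\ell.
\]
Summing over the $\binom{n}{i}$ strata of dimension $i$ for $i=0,1,\dotsc,n$ yields exactly the bound stated in Theorem~\ref{bettilog}.

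The main obstacle is the bookkeeping of the previous two steps: one must verify that the restriction of a $(d,\ell)$-dense exponential sum to each boundary stratum of the manifold-with-corners compactification of \cite{BS_Betti} is itself $(d,\ell)$-dense (with the same $\ell$) in the smaller ambient dimension, and that the Morse-critical system assembled from this restriction together with the Jacobian equations coming from $L$ still has $(d,\ell)$-dense support, so that Theorem~\ref{Th:Structured_bound} applies cleanly on each stratum. A careful analysis of how $\psi(d\simplex^\ell\cap\Z^\ell)$ behaves under restriction is required, along with a confirmation that the compactification chosen in \cite{BS_Betti} indeed produces exactly $\binom{n}{i}$ strata of dimension $i$.
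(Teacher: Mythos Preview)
Your proposal is correct and follows essentially the same route as the paper: reduce Theorem~\ref{bettilog} to a bound on $Z\cap\Delta_M$ for a large generic simplex $\Delta_M$ (this is the manifold-with-corners compactification of~\cite{BS_Betti}), use a generic linear function $L_u$ as a stratified Morse function, bound the critical points on each face by the $(d,\ell)$-dense fewnomial bound of Theorem~\ref{Th:Structured_bound}, and sum over the $\binom{n}{i}$ faces of dimension $i$.

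The one ingredient you flag as an obstacle but do not resolve is handled in the paper by a preliminary normalization: after an affine change of the exponent lattice one may assume $\calW=\{e_1,\dotsc,e_n\}$ is the standard basis. With this choice the facets of $\Delta_M$ are coordinate hyperplanes $z_i=-M_i$ (plus the hyperplane $\sum z_i=M_0$ at infinity), so restricting $\varphi$ to a face $H_S$ with $0\notin S$ simply eliminates the variables $z_i$ for $i\in S$ and the corresponding exponentials $e^{z_i}$; the projection $\pi$ sends $e_i\mapsto 0$ for $i\in S$, carries $\calW$ to the standard basis of the smaller space, and $\pi\circ\psi$ is still affine, so the restricted support is $(d,\ell)$-dense with $n$ replaced by $n-|S|$. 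The directional-derivative equations $D_v\varphi_S=0$ defining the critical locus of $L_u$ have the same support as $\varphi_S$, so Theorem~\ref{Th:Structured_bound} applies directly. Faces with $0\in S$ are discarded by the same tilting argument as in~\cite{BS_Betti}. With this normalization in hand, your outline goes through verbatim.
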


 An affine change of coordinates replaces $\calW$ with another
 set of independent vectors and replaces $\psi$ with another affine map but it
 does not change the $(d,\ell)$-dense structure. 
 We may thus assume that the vectors in $\calW$ are the standard unit basis vectors 
 in $\R^n$, and so $\varphi$ includes the coordinate exponentials $e^{z_i}$ for 
 $i = 1,\dots,n$. 
 Let $M \colon = (M_0,M_1,\dots,M_n)$ be a list of positive numbers and set
\[
 \Delta(M) \ :=\ \{z \in \R^n \mid z_i \geq -M_i,\, i = 1,\dots,n \quad \text{ and }
 \quad \sum_i z_i \leq M_0\}, 
\]
 which is a nonempty simplex. We will use stratified Morse theory to bound the Betti
 numbers of $Y := Z \cap \Delta_M$ when $M$ is general.

%%%%%%%%%%%%%%%%%%%%%%%%%%%%%%%%%%%%%%%%%%%%%%%%%%%%%%%%%%%%%%%%%%%%%%
\begin{theorem}\label{bettigeneral}
 For $M$ general, the sum of the Betti numbers of\/ $Y$ is at most
 \[
    \frac{e^2{+}3}{4}\, 2^{\binom{\ell}{2}} \cdot d^\ell
    \cdot \sum_{i=0}^{n} \tbinom{n}{i}\, i^\ell\,.
 \]
\end{theorem}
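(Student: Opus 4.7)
My plan is to apply stratified Morse theory to $Y = Z \cap \Delta(M)$, following the strategy used in~\cite{BS_Betti} for ordinary fewnomials. For generic $M$, the hypersurface $Z$ meets every open face $F$ of $\Delta(M)$ transversally, so setting $Y_F := Z \cap \operatorname{relint}(F)$ endows $Y$ with a Whitney stratification indexed by the faces of $\Delta(M)$. I then select a generic affine-linear function $\lambda\colon\R^n\to\R$ so that its restriction to each stratum $Y_F$ is a Morse function with only nondegenerate critical points. The main theorem of stratified Morse theory for a manifold with corners~\cite{GM88} gives the weak Morse inequality
\[
   b_*(Y)\ \leq\ \sum_F \bigl|\operatorname{Crit}(\lambda|_{Y_F})\bigr|\,.
\]

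On a $k$-dimensional face $F$, the critical-point condition on $Y_F$ is that $\varphi$ vanishes and $d\varphi|_{T_p F}$ is proportional to $d\lambda|_{T_p F}$; this is a system of $k$ equations in $k$ variables. Since differentiating $e^{z\cdot\alpha}$ only multiplies by the scalar $\alpha_j$, every partial derivative $\partial_j\varphi$ is an exponential sum supported on $\calA$, and so each equation of the critical-point system is supported on $\calA$. For a \emph{coordinate} face, obtained by fixing $z_i=-M_i$ for $i$ in some $I\subseteq\{1,\dotsc,n\}$ with $|I|=n-k$, substitution absorbs the factors $e^{-M_i\alpha_i}$ into the coefficients and reduces the system to exponential sums on $\R^k$ whose exponent set is the projection of $\calA$ off the coordinates in $I$. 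This projection preserves the decomposition~\eqref{Eq:near_polyhedral}: the vectors $\{e_j : j\notin I\}$ constitute the new $\calW'\subset\R^k$, and the composition of $\psi$ with the projection is the new affine-linear map whose image is still the image of the lattice points of $d$ times the standard simplex. Hence the restricted support is $(d,\ell)$-dense in $\R^k$, and Theorem~\ref{Th:Structured_bound} (whose Khovanskii--Rolle proof carries over unchanged to exponential sums with real exponents, as already noted preceding Theorem~\ref{bettilog}) bounds $|\operatorname{Crit}(\lambda|_{Y_F})|$ by $\tfrac{e^2+3}{4}\,2^{\binom{\ell}{2}}\,k^\ell\,d^\ell$. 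Summing over the $\binom{n}{n-k}=\binom{n}{k}$ coordinate faces of each dimension $k$ yields the claimed total
\[
   \tfrac{e^2+3}{4}\,2^{\binom{\ell}{2}}\,d^\ell\sum_{i=0}^n \binom{n}{i}\,i^\ell\,.
\]

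The principal obstacle is the ``diagonal'' facet $\sum z_i = M_0$, whose sub-faces are not enumerated by the sum above. Under the substitution $z_n = M_0 - \sum_{i<n} z_i$, the vectors in $\calW$ project to $n$ vectors in $\R^{n-1}$ whose sum is zero, so the projected support does not automatically fit the $(d,\ell)$-dense template with the same parameters. I plan to bypass the diagonal contribution by letting $M_0\to\infty$ and showing, via an asymptotic analysis of the exponential sum $\varphi$, that no critical points of $\lambda|_{Y_F}$ escape to the diagonal facet while $b_*(Y)$ stabilizes; alternatively, one may refine the stratified-Morse count by recognizing the restricted system on each diagonal sub-face as $(d,\ell)$-dense in one fewer variable after a change of coordinates, and verifying by a direct combinatorial reshuffle that the resulting contributions are absorbed into the bound above. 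A secondary, routine, technicality is ensuring via a Bertini-type genericity argument on $\psi$ and $M$ that the projected supports on all strata simultaneously give honest $(d,\ell)$-dense decompositions with no accidental exponent collisions.
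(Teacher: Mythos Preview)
Your overall architecture matches the paper's: stratify $Y$ by the faces of $\Delta(M)$, use a generic linear functional $L_u$ as a Morse function, observe that on each coordinate face the critical-point system is again a $(d,\ell)$-dense system in fewer variables, and sum the resulting bounds.  Your treatment of the coordinate faces is correct and essentially identical to the paper's.

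The gap is precisely where you flag it, the diagonal facet $H_0=\{\sum z_i=M_0\}$ and its sub-faces, and neither of your proposed work-arounds closes it.  Sending $M_0\to\infty$ does not prove Theorem~\ref{bettigeneral} as stated (the bound is claimed for each fixed generic $M$, not merely in a limit), and the ``combinatorial reshuffle'' is not an argument.  More fundamentally, the obstruction arises because you invoked the \emph{crude} weak Morse inequality $b_*(Y)\le\sum_F|\mathrm{Crit}(L_u|_{Y_F})|$, which forces you to bound the diagonal contributions.

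The paper avoids this entirely by using a sharper consequence of stratified Morse theory (Proposition~2 of~\cite{BS_Betti}): only those stratified critical points $p$ at which $L_u$ attains its \emph{minimum} on the normal slice $N(p)$ contribute to $b_*(Y)$.  One is then free to choose the direction $u$; taking $u$ so that $L_u$ decreases along the inward normal to $H_0$ guarantees that at every critical point lying on a stratum $Z_S$ with $0\in S$, the linear form $L_u$ is \emph{not} minimized on the normal slice, so those points simply do not count.  This eliminates the diagonal contribution without any estimate on it, and the remaining sum over $S\subset\{1,\dotsc,n\}$ gives exactly $\sum_{i=0}^n\binom{n}{i}i^\ell$.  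Replace your weak Morse inequality by this refined count and the choice-of-$u$ trick, and the proof is complete.
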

%%%%%%%%%%%%%%%%%%%%%%%%%%%%%%%%%%%%%%%%%%%%%%%%%%%%%%%%%%%%%%%%%%%%%%

Theorem~\ref{bettilog} is a consequence of Theorem~\ref{bettigeneral}. 
See Theorem $1'$ in~\cite{BS_Betti} for a detailed proof.

\begin{proof}[Proof of Theorem~$\ref{bettigeneral}$]
 Given positive numbers $M = (M_0,M_1,\dots,M_n)$, define affine hyperplanes in $\R^n$
 by
 \[
  H_0\ :=\ \{z \mid \sum_i z_i = M_0\} \quad \text{ and } \quad 
  H_i\ :=\ \{z \mid z_i = -M_i\}, \quad \text{ for } i = 1,\dots,n\,.
 \]
 For each proper subset $S \subset \{0,\dots,n\}$, define an affine linear subspace
 \[
  H_S\ :=\ \bigcap_{i \in S} H_i\,.
 \]
 Since each $M_i > 0$, this has dimension $n-|S|$, and these subspaces are the affine
 linear subspaces supporting the faces of the simplex $\Delta_M$.

 Choose $M$ generic so that for all $S$ the subspace $H_S$ meets $Z$ transversally. 
 For each subset $S$, set $Z_S := Z \cap H_S$. This is a smooth hypersurface in $H_S$ and  
 therefore has dimension $n-|S|-1$. The boundary stratum $Y_S$ of $Y = Z \cap \Delta_M$
 lying in the relative interior of the face supported by $H_S$ is an open subset of $Z_S$.

 For a nonzero vector $u \in \R^n$, the directional derivative $D_u\varphi$ is
 \[
  \sum_{\alpha \in \calA} (u \cdot \alpha)c_{\alpha}e^{z \cdot \alpha}
 \]
 which is an exponential sum having the same exponents as $\varphi$. Let $L_u$ be the
 linear function on $\R^n$ defined by $z \mapsto u \cdot z$.

 The critical points of the function $L_u$ restricted to $Z$ are the zeroes of the
 system
 \[
  \varphi(z) = 0 \quad \text{ and } \quad 
  D_v \varphi(z) = 0 \quad \text{ for } v \in u^{\perp}\,.
 \]

 When $u$ is general and we choose a basis for $u^\perp$, this becomes a system of
 $n$ exponential sums in $n$ variables having the same support as the original polynomial.
 Therefore, the whole system has $(d,\ell)$-dense support,
 $\calA=\psi(d\simplex^\ell\cap\Z^\ell)\cup \{e_1,\dotsc,e_n\}$.
 By Theorem~\ref{Th:Structured_bound}, the number of solutions is at most
 \[
   \frac{e^2{+}3}{4}\, 2^{\binom{\ell}{2}}\, n^\ell \cdot d^\ell\,.
 \] 

 We use this to estimate the number of critical points of the function $L_u$ restricted
 to $Z_S$. 
 The restriction  $\varphi_S$ of $\varphi$ to $H_S$ defines $Z_S$ as a hypersurface in $H_S$.
 We determine this restriction. 
 Suppose first that $0\notin S$.
 If $i\in S$, then we may use  the equation $z_i = -M_i$ to eliminate the variable $z_i$ and
 the exponential $e^{z_i}$  from $\varphi$. 
 The effect of these substitutions for $i\in S$ on the exponents is the projection $\pi$
 sending $e_i\mapsto 0$ for $i\in S$.
 Then $\pi \circ \psi$ is still affine and so 
 $\varphi_S$ is still a $(d,\ell)$-dense fewnomial but with $n$ replaced by $n{-}|S|$,
 and thus the number of critical points of $L_u|_{H_S}$ on $Z_S$ is bounded by 
\[
   \frac{e^2{+}3}{4}\, 2^{\binom{\ell}{2}}\, (n-|S|)^\ell \cdot d^\ell\,.
\]

 If $0 \in S$, then we could use fewnomial theory to estimate the number of critical
 points of $L_u|_{H_S}$ on $Z_S$, but will not need that estimate.

 Let $u$ be a general vector in $\R^n$ such that $L_u$ is a Morse function for the 
 stratified space $Y$. By Proposition $2$ in~\cite{BS_Betti}, the sum of the Betti numbers
 of $Y$ is bounded by the number of critical points $p$ of $L_u$ for which $L_u$ achieves
 its minimum on the normal slice $N(p)$ at $p$. Since the strata $Y_S$ of $Y$ are open
 subsets of the manifolds $Z_S$, this number is bounded above by the number of such
 critical points of $L_u$ on the manifolds $Z_S$. Just as in~\cite{BS_Betti}, we can alter
 $u$ so that no critical point in any $Z_S$ with $0 \in S$ contributes. Therefore, 
 the sum of the Betti numbers of $Y$ is bounded above by
 \[
  \frac{e^2{+}3}{4}\, 2^{\binom{\ell}{2}} \cdot d^\ell \cdot
  \sum_{S \subset \{1,\dots,n\}} (n-|S|)^\ell 
   \ =\ 
   \frac{e^2{+}3}{4}\, 2^{\binom{\ell}{2}} \cdot d^\ell \cdot
  \sum_{i=0}^{n} {\binom{n}{i}} (n-i)^\ell\,.\vspace{-30pt}
 \]
\end{proof}\vspace{15pt}

%%%%%%%%%%%%%%%%%%%%%%%%%%%%%%%%%%%%%%%%%%%%%%%%%%%%%%%%%%%%%%%%%%%%%%%%%
\providecommand{\bysame}{\leavevmode\hbox to3em{\hrulefill}\thinspace}
\providecommand{\MR}{\relax\ifhmode\unskip\space\fi MR }
% \MRhref is called by the amsart/book/proc definition of \MR.
\providecommand{\MRhref}[2]{%
  \href{http://www.ams.org/mathscinet-getitem?mr=#1}{#2}
}
\providecommand{\href}[2]{#2}

%%%%%%%%%%%%%%%%%%%%%%%%%%%%%%%%%%%%%%%%%%%%%%%%%%%%%%%%%%%%%%%%%%%%%%%%%
\end{document}